\numberwithin{equation}{section}
\newtheorem{thm}{Theorem}[section]
\newtheorem{cor}[thm]{Corollary}
\newtheorem{lem}[thm]{Lemma}
\theoremstyle{definition}
\newtheorem{defn}[thm]{Definition}
\theoremstyle{remark}
\newtheorem{rem}[thm]{Remark}
\newtheorem{example}{Example}
\numberwithin{equation}{section}
\DeclareMathOperator{\RE}{Re}
\begin{document}
	
	\title[\tiny{Exact Differential Subordination}]{On Convex Dominants of Exact Differential Subordination}

	\author[S. Sivaprasad Kumar]{S. Sivaprasad Kumar}
	\address{Department of Applied Mathematics, Delhi Technological University, Delhi--110042, India}
	\email{spkumar@dce.ac.in}

		\author[S. Banga]{Shagun Banga}
	\address{Department of Applied Mathematics, Delhi Technological University, Delhi--110042, India}
	\email{shagun05banga@gmail.com}

	\subjclass[2010]{30C45, 30C50}
	
	\keywords{Differential subordination, Exact differential equation, Convex functions, best dominant}
\maketitle
		\begin{abstract} 	Let $h$ be a non vanishing convex univalent function and $p$ be an analytic function in $\mathbb{D}$. We consider the differential subordination $$\psi_i(p(z), z p'(z)) \prec h(z)$$ with the admissible functions in consideration as $\psi_1:=(\beta p(z)+\gamma)^{-\alpha}\left(\tfrac{(\beta p(z)+\gamma)}{\beta(1-\alpha)}+ z p'(z)\right)$ and
			$\psi_2:=\tfrac{1}{\sqrt{\gamma \beta}}\arctan\left(\sqrt{\tfrac{\beta}{\gamma}}p^{1-\alpha}(z)\right)+\left(\tfrac{1-\alpha}{\beta p^{2 (1-\alpha)}(z)+\gamma}\right)\tfrac{z p'(z)}{p^{\alpha}(z)}$. The objective of this paper is to find the dominants, preferably the best dominant(say $q$) of the solution of the above differential subordination satisfying $\psi_i(q, n zq'(z))= h(z)$. Further, we show that $\psi_i(q,zq'(z))= h(z)$ is an exact differential equation and $q$ is a convex univalent function in $\mathbb{D}$. In addition, we estimate the sharp lower bound of $\RE p$ for different choices of $h$ and derive a univalence criteria for functions in $\mathcal{H}$(class of analytic normalized functions) as an application to our results. 
	\end{abstract}
	
	\section{Introduction}
	\label{intro}
		Let  $\mathcal{H}[a,n]$ denote the class of analytic functions $f$ defined on the open unit disk $\mathbb{D}=\{z:|z|<1\}$, of the form $f(z)=a+a_n z^n+a_{n+1}z^{n+1}+\cdots$, where $n$ is a positive integer and $a \in \mathbb{C}$. Let $\mathcal{H}:=\{f \in \mathcal{H}[0,1]: f'(0)=1\}$ and $\mathcal{S}$ be the subclass of $\mathcal{H}$ consisting of the univalent functions. The Carath\'{e}odory class $\mathcal{P}$ consists of the analytic functions $p$ defined on $\mathbb{D}$ of the form $p(z) = 1 +p_1 z +p_2  z^2+\cdots,$ having positive real part in $\mathbb{D}$. Consider two analytic functions $f$ and $g$ in $\mathbb{D}$, we say $f(z)$ is subordinate to $g(z)$, denoted by $f(z) \prec g(z)$, if $f(z) = g(\omega(z))$, where $\omega$ is a Schwarz function such that $\omega(0)=0$ and $|\omega(z)|<1$. If $g$ is univalent in $\mathbb{D}$, then $f(z) \prec g(z)$ if and only if  $f(0)=g(0)$ and $f(\mathbb{D}) \subseteq g(\mathbb{D})$. Ma and Minda \cite{ma} unified many subclasses of starlike and convex functions, as follows:
	\begin{equation}\label{c}
	\mathcal{S}^*(\phi)=\left\{f \in \mathcal{S}: \dfrac{z f'(z)}{f(z)} \prec \phi(z)\right\} \text{ and }  \mathcal{C}(\phi)=\left\{f \in \mathcal{S}: 1+\dfrac{z f''(z)}{f'(z)} \prec \phi(z)\right\},
	\end{equation}
	where $\phi$ is an analytic function having positive real part in $\mathbb{D}$ and $\phi'(0)>0$ such that $\phi(\mathbb{D})$ is symmetric with respect to the real axis and starlike with respect to $\phi(0)=1$. For $\phi(z)=(1+z)/(1-z)$, the above classes, respectively reduces to $\mathcal{S}^*$, the class of starlike univalent functions and $\mathcal{C}$, the class of convex univalent functions. Recall that the $p$-valent function $f(z)=z^p+a_{p+1}z^{p+1}+\cdots$ is convex if and only if 
	\begin{equation}\label{convex}
	\RE \left(1+\dfrac{z f''(z)}{f'(z)}\right) >0,\quad z \in \mathbb{D}.
	\end{equation}This class is extensively studied in \cite{pvalent}. We may now conclude:
	\begin{defn} \label{defcon} 
		The function $f \in \mathcal{H}$ is convex univalent in $\mathbb{D}$ if and only if (\ref{convex}) holds.
	\end{defn} Moreover, the univalence criteria for such functions is illustrated recently in \cite{ozaki} as follows: 
	\begin{lem}\emph{\cite{ozaki}}\label{unilem}
		Let the function $q(z) = z+a_2 z^2 +\cdots$ be analytic in $\mathbb{D}$ such that $q(z) q'(z)/z \neq 0$ holds. Then $q$ satisfying $1+\RE\left(z q''(z)/q'(z)\right) > \alpha$ in $\mathbb{D}$ is univalent if and only if $-1/2 \leq \alpha \leq 0$. 
	\end{lem}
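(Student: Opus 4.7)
The plan is to establish the equivalence by treating sufficiency and sharpness separately. Because the hypothesis $1 + \RE(zq''(z)/q'(z)) > \alpha$ strengthens as $\alpha$ increases, sufficiency for the whole interval $[-1/2,0]$ reduces to the endpoint $\alpha = -1/2$. There I would run the classical Ozaki argument: show that $q$ is close-to-convex with respect to the convex univalent function $g(z) = -\log(1-z)$, so that by Kaplan's close-to-convex criterion $q$ is univalent. The concrete reduction is to prove $\RE\bigl((1-z)q'(z)\bigr) > 0$ in $\mathbb{D}$. The nonvanishing hypothesis $q(z)q'(z)/z \neq 0$ is what permits a single-valued branch of $\log q'(z)$; a logarithmic differentiation then identifies the Ozaki inequality $\RE(1 + zq''(z)/q'(z)) > -1/2$ with the half-plane condition on $(1-z)q'(z)$, and this final step can be closed either via the minimum principle for harmonic functions or, more economically, via Jack's lemma applied at a putative first boundary touching.

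For sharpness, I would handle the two endpoints differently. The lower bound $\alpha = -1/2$ is the more substantive one: for every $\alpha<-1/2$ I would produce an extremal candidate of the form $q_{*}'(z) = (1-z)^{-2(1+\alpha)}$ (suitably normalized), verify that it obeys all the hypotheses, and then show that $q_{*}(\mathbb{D})$ self-overlaps by tracking the argument of $q_{*}$ along $\partial\mathbb{D}$, thereby ruling out injectivity. The upper bound $\alpha = 0$ admits a structural rather than extremal explanation: for $\alpha > 0$ the hypothesis $\RE(1 + zq''(z)/q'(z)) > 0$ already forces convexity via Definition~\ref{defcon}, hence univalence, without any need of the hypothesis $q(z)q'(z)/z \neq 0$; so $\alpha = 0$ marks precisely the boundary of the regime in which the nonvanishing assumption genuinely contributes.

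The main obstacle I anticipate is the endpoint behaviour at $\alpha = -1/2$. Standard close-to-convex arguments yield strict positivity for $\alpha > -1/2$ but degenerate exactly at the boundary, so I would pass to the limit through the normal family of dilations $q_r(z) = q(rz)/r$ as $r \nearrow 1$ and invoke Hurwitz's theorem to preserve univalence under locally uniform convergence. A secondary but necessary check is that the nonvanishing hypothesis persists in the limit, which is where the assumption $q(z)q'(z)/z \neq 0$ re-enters in a nontrivial way and prevents the limiting dilation from being constant.
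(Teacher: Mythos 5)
The paper itself gives no proof of Lemma~\ref{unilem}; it is quoted from \cite{ozaki}, so your attempt has to stand on its own merits. It does not: the load-bearing step of your sufficiency argument is false. You propose to settle the case $\alpha=-1/2$ by proving $\RE\bigl((1-z)q'(z)\bigr)>0$, i.e.\ close-to-convexity with respect to the \emph{fixed} convex function $-\log(1-z)$. The Ozaki hypothesis does not imply this. Take $q'(z)=(1-z)^{-3}$; then
\[
1+\frac{zq''(z)}{q'(z)}=\frac{1+2z}{1-z}
\]
maps $\mathbb{D}$ onto the half-plane $\RE w>-1/2$, so the hypothesis holds sharply, yet $(1-z)q'(z)=(1-z)^{-2}$ has argument filling all of $(-\pi,\pi)$ and hence negative real part on a nonempty open subset of $\mathbb{D}$. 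This $q$ \emph{is} close-to-convex, but with respect to $z/(1-z)$ (note $(1-z)^{2}q'(z)=(1-z)^{-1}$ has real part $>1/2$); the convex comparison function must depend on $q$, and no single choice of it works for the whole class. The only uniform route from the hypothesis to close-to-convexity is Kaplan's integral criterion: for locally univalent $q$ and $0<\theta_2-\theta_1\le 2\pi$,
\[
\int_{\theta_1}^{\theta_2}\RE\left(1+\frac{re^{i\theta}q''(re^{i\theta})}{q'(re^{i\theta})}\right)d\theta\;>\;\alpha\,(\theta_2-\theta_1)\;\ge\;-\pi \qquad (\alpha\ge -1/2),
\]
which is precisely where the constant $-1/2$ enters. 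With this in place the Jack's-lemma step, the minimum principle, and the entire Hurwitz--dilation paragraph are aimed at a non-problem: the hypothesis is already a strict open inequality at $\alpha=-1/2$, so there is no endpoint degeneracy to repair by passing to dilations.

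The sharpness half also contains concrete errors. Your extremal family $q_*'(z)=(1-z)^{-2(1+\alpha)}$ satisfies $\RE\bigl(1+zq_*''(z)/q_*'(z)\bigr)>-\alpha$, so for $\alpha<-1/2$ it is \emph{convex} (for $\alpha=-1$ it is the identity map) and rules out nothing. The correct choice is $q_*'(z)=(1-z)^{2\alpha-2}$, for which the infimum of $\RE\bigl(1+zq_*''(z)/q_*'(z)\bigr)$ over $\mathbb{D}$ equals $\alpha$, and $q_*(z)=\bigl(1-(1-z)^{2\alpha-1}\bigr)/(2\alpha-1)$ fails to be univalent exactly when $|2\alpha-1|>2$, i.e.\ when $\alpha<-1/2$. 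Finally, your treatment of the upper endpoint establishes no necessity: observing that $\alpha>0$ forces convexity shows that every admissible $q$ is \emph{still} univalent for $\alpha>0$, so no non-univalent example can witness the failure of the criterion there. The bound $\alpha\le 0$ in the statement can only be read as delimiting where the condition is a genuine extension beyond convexity, not as a threshold past which univalence breaks down, and your structural remark does not (and cannot) convert it into a proved ``only if.''
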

	Geometrically, it is also known that a function $f(z)$ is convex(starlike) function if it maps $\mathbb{D}$ onto a convex(starlike) domain. A function $f \in \mathcal{H}$ is starlike if and only if it satisfies
	\begin{equation}\label{cstar}
	\RE \left(\dfrac{z f'(z)}{f(z)}\right) >0, \quad z \in \mathbb{D}.
	\end{equation} 
	This characterization for starlikeness of functions in $\mathcal{H}$ cannot be extended to $\mathcal{H}[a,1]$. We can see this in case of $f_0(z)= (1+z)/(1-z) \in \mathcal{H}[1,1]$, which maps the unit disk $\mathbb{D}$ onto a convex domain, hence starlike in $\mathbb{D}$ but fails to satisfy~\eqref{cstar} as $\RE(z f_0'(z)/f_0(z))=\RE(2z/(1-z^2)) \not>0$. This reveals that the normalization $f(0)=0$ is essential, which is not followed by functions in $\mathcal{H}[a,1]$ $(a \neq 0)$. But, the characterization of convex functions $f \in \mathcal{H}$, given in Definition \ref{defcon} can be extended to convex functions in $\mathcal{H}[a,1]$ as follows: 
	\begin{lem}\label{extcon}
		The function $\tilde{f}(z)= a+a_1 z+ a_2 z^2+\cdots \in \mathcal{H}[a,1]$ is convex univalent in $\mathbb{D}$ if and only if it satisfies
		\begin{equation}\label{conf}
		\RE\left(1+\dfrac{z \tilde{f}''(z)}{\tilde{f}'(z)}\right) >0, \quad z \in \mathbb{D}.
		\end{equation} \end{lem}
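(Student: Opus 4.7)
The plan is to reduce Lemma \ref{extcon} to the already-known characterization of convexity for functions in $\mathcal{H}$ stated in Definition \ref{defcon}, by composing $\tilde f$ with an affine map that sends it into $\mathcal{H}$. Since univalence is assumed (or to be established), we may take $a_1 \neq 0$; otherwise $\tilde f'(0)=0$, so $\tilde f$ fails to be locally injective at the origin and the right-hand side of \eqref{conf} is ill-defined, making both statements vacuously compatible.

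Assuming $a_1 \neq 0$, define
\[
g(z) \;=\; \frac{\tilde f(z)-a}{a_1} \;=\; z + \frac{a_2}{a_1}z^2 + \frac{a_3}{a_1}z^3 + \cdots \;\in\; \mathcal{H}.
\]
The affine map $w \mapsto a + a_1 w$ is a bijection of $\mathbb{C}$ that sends convex sets to convex sets, so $\tilde f(\mathbb{D})$ is a convex domain if and only if $g(\mathbb{D})$ is, and $\tilde f$ is univalent in $\mathbb{D}$ if and only if $g$ is. Thus $\tilde f$ is convex univalent in $\mathbb{D}$ if and only if $g$ is convex univalent in $\mathbb{D}$.

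Next I would apply Definition \ref{defcon} to $g$: the function $g \in \mathcal{H}$ is convex univalent in $\mathbb{D}$ if and only if $\RE(1+zg''(z)/g'(z))>0$ on $\mathbb{D}$. Differentiating the relation $\tilde f(z) = a + a_1 g(z)$ gives $\tilde f'(z) = a_1 g'(z)$ and $\tilde f''(z) = a_1 g''(z)$, and hence
\[
1+\frac{z\tilde f''(z)}{\tilde f'(z)} \;=\; 1+\frac{z g''(z)}{g'(z)},
\]
so condition \eqref{conf} is identical to the convexity criterion for $g$.

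Combining the two equivalences yields the lemma. No real obstacle is anticipated, since both ingredients (affine invariance of convexity and Definition \ref{defcon}) are elementary; the only subtlety worth flagging in the write-up is the tacit hypothesis $a_1 \neq 0$, which is automatic in the presence of convex univalence.
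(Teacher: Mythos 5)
Your proposal is correct and follows essentially the same route as the paper: normalize via $g(z)=(\tilde f(z)-a)/a_1\in\mathcal{H}$, observe that the affine map preserves convexity and univalence, note that $1+z\tilde f''/\tilde f'=1+zg''/g'$, and invoke Definition \ref{defcon}. Your explicit remark that $a_1\neq 0$ is tacitly required is a small point the paper glosses over, but otherwise the two arguments coincide.
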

	\begin{proof}
		Consider a function $f(z)=\tfrac{\tilde{f}(z)-a}{a_1}$, then $f \in \mathcal{H}$ and we get 
		\begin{equation}\label{comp}
		\RE\left(1+\dfrac{z \tilde{f}''(z)}{\tilde{f}'(z)}\right)=	\RE\left(1+\dfrac{z f''(z)}{f'(z)}\right).
		\end{equation}
		Now, if we assume $\tilde{f}$ is convex univalent in $\mathbb{D}$, then geometrically so is the function $f(z)$. Thus equation~\eqref{conf} follows using Definition~\ref{defcon} and equation~\eqref{comp}. Conversely, let equation (\ref{conf}) holds, then equation (\ref{convex}) holds true using equation (\ref{comp}). Thus by Definition \ref{defcon}, we get $f$ is convex univalent in $\mathbb{D}$ and geometrically so is $\tilde{f}(z)$. This completes the proof.
	\end{proof}
	The theory of  differential subordinations is extensively studied in \cite{miler2}. For details regarding dominants and best dominant of the differential subordinations, see \cite[pp. 16]{miler2}. For some more work in this direction, one may refer \cite{alids,banga, ravi}. The set of analytic and univalent functions $\tilde{q}$ on $\overline{\mathbb{D}}\backslash E(\tilde{q})$, where
	\begin{equation*}
	E(\tilde{q})=\{\zeta \in \partial \mathbb{D}: \lim_{z\rightarrow \zeta} \tilde{q}(z)= \infty\},
	\end{equation*}
	such that $\tilde{q}'(\zeta) \neq 0$ for $\zeta \in \partial \mathbb{D} \backslash E(\widetilde{q})$ is denoted by $\widetilde{Q}$. The theory of admissible functions paved a way in finding the dominants of the solutions of differential subordination altogether with a distinctive approach. The admissible function is defined as follows:
	\begin{defn}\cite{miler2}\label{adm}
		Let $\Omega$ be a set in $\mathbb{C}$, $\tilde{q} \in \widetilde{Q}$ and $n$ be a positive integer. The class of admissible functions $\Psi_n[\Omega,\tilde{q}]$, consists of the functions $\psi:\mathbb{C}^2 \times \mathbb{D}\rightarrow \mathbb{C}$,  which satisfy the admissibility condition 
		\begin{equation}\label{admcon}
		\psi(r,s;z) \notin \Omega,
		\end{equation} 
		whenever $r=\tilde{q}(\zeta)$, $s=m \zeta \tilde{q}'(\zeta)$, $z \in \mathbb{D}$, $\zeta \in \partial{ \mathbb{D}} \backslash E(\tilde{q})$ and $m \geq n$. 
	\end{defn}
	In particular, if $\Omega$ is a simply connected domain, $\Omega \neq \mathbb{C}$ and $h$ conformally maps $\mathbb{D}$ onto $\Omega$, the class $\Psi_n[\Omega,\tilde{q}]$ is denoted by $\Psi_n[h,\tilde{q}]$. Recently, in \cite{adiba}, authors have studied the applications of admissibility conditions for various known classes of starlike functions. We consider a special case for the class $\Psi_n[\Omega,\tilde{q}]$, where  $\Omega=\{w: \RE w>0\}$, $\tilde{q}(\mathbb{D})=\Omega$, $\tilde{q}(0)=1$, $E(\tilde{q})=\{1\}$ and $\tilde{q} \in \widetilde{Q}$, then above admissibility condition, given in~\eqref{admcon} reduces to 
	\begin{equation*}
	\psi(\rho i, \sigma;z) \notin \Omega,
	\end{equation*}  
	where $\rho$ and $\sigma \in \mathbb{R}$, $\sigma \leq -(n/2)|1-i \rho|^2$, $z \in \mathbb{D}$ and $n \geq 1$. In this special case, set the class $\Psi_n[\Omega,q]=:\Psi_n\{1\}$.
	
	Now, we recall that a first order differential equation of the type:
	\begin{equation}\label{diffexact}
	M(x,y) dx + N(x,y) dy =0,
	\end{equation} 
	where $M(x,y)$ and $N(x,y)$ have continuous partial derivatives in some domain $D \subset \mathbb{R}^2$, is called an exact differential equation if and only if $\tfrac{\partial M}{\partial y}=\tfrac{\partial N}{\partial x}$. The solution of such a differential equation (\ref{diffexact}), is given by:
	\begin{equation}\label{solexact}
	\int M(x,y) dx + \int \tilde{N}(y) dy =c,
	\end{equation}
	where $c$ is the integration constant and $\tilde{N}(y)$ is equal to that part of the expression $N(x,y)$, which is independent of $x$. This concept we  carry to the complex plane in the following definition:
	\begin{defn}
		Let $p \in \mathcal{H}[a_0,n]$ for some suitable $a_0$ and $\psi$ be an analytic function satisfying the following differential subordination  \begin{equation}\label{exact} \psi(p(z),zp'(z)) \prec h(z),\end{equation} which further implies $p \prec q$, where $p$ is a solution of~\eqref{exact} and $q$ is the best dominant satisfying the differential equation \begin{equation}\label{bestexact}
		\psi(q(z),nzq'(z))=h(z).\end{equation} Then the differential subordintion~\eqref{exact} is said to be an {\it exact differential subordination} if and only if the following conditions hold:\\
		$\emph{(i)}$ $\psi$ can be expressed as $\widetilde{M}(z,p)+N(z,p)~p'(z)$ such that $\tfrac{\partial \widetilde{M}}{\partial p}=\tfrac{\partial N}{\partial z}$ for all $n$.\\
		$\emph{(ii)}$ For $n=1$, the equation~\eqref{bestexact} reduces to  an exact differential equation: $$M(z,q)~dz + N(z,q)~dq=0,$$ or equivalently $\tfrac{\partial M}{\partial q}=\tfrac{\partial N}{\partial z}$, where $M(z,q):=\widetilde{M}(z,q)-h(z)$. \end{defn}
	In the present investigation, we mainly focus on the following exact type of differential subordinations:
	\begin{equation}\label{subord1}
	(\beta p(z)+\gamma)^{-\alpha}\left(\dfrac{(\beta p(z)+\gamma)}{\beta(1-\alpha)}+ z p'(z)\right) \prec h(z),
	\end{equation}
	and \begin{align}\label{subord11}
	&\dfrac{1}{\sqrt{\gamma \beta}}\arctan\left(\sqrt{\dfrac{\beta}{\gamma}}p^{1-\alpha}(z)\right)+\left(\dfrac{1-\alpha}{\beta p^{2 (1-\alpha)}(z)+\gamma}\right)\dfrac{z p'(z)}{p^{\alpha}(z)} \prec h(z),
	\end{align}
	where $h$ is a non vanishing convex univalent function in $\mathbb{D}$ with $h(0)=a$ and $p \in \mathcal{H}[a_0,n]$ for appropriate $a_0$. We denote the expression on left side of~\eqref{subord1} and~\eqref{subord11} by $\psi_1(p(z), z p'(z))$ and $\psi_2(p(z), z p'(z))$ respectively. The speciality of  the above two differential subordinations is that they generalize a result studied by Hallenbeck and Ruscheweyh \cite{halen}. Here we find the dominant and the best dominant $q$ of the solutions of the above differential subordinations,  which satisfy 
	\begin{equation}\label{qsubord}
	\psi_i(q(z), n z q'(z))=h(z)\quad (i=1,2).\end{equation}
	Now we show that the above two differential subordinations are exact. Take $\widetilde{M}(z,p):= \tfrac{(\beta p+\gamma)^{1-\alpha}}{\beta(1-\alpha)}$ and $N(z,p):=\tfrac{ z}{(\beta p+\gamma)^\alpha}$ in~\eqref{subord1}, then we have $\tfrac{\partial \widetilde{M}}{\partial p}=\tfrac{\partial N}{\partial z}$. Further, for $n=1$ the equation~(\ref{qsubord}) with $i=1$, reduces to the following exact differential equation in $z$ and $q$:
	\begin{equation}\label{exact1}
	M(z,q)~dz + N(z,q)~dq=0
	\end{equation}
	as $\tfrac{\partial M}{\partial q}=\tfrac{\partial N}{\partial z}$. Thus~\eqref{subord1} is an exact differential subordination. Since~(\ref{exact1}) is an exact differential equation, we can obtain its solution analogous to~\eqref{solexact} as follows: 
	\begin{align}\label{solexact1}
	&\int M(z,q)~ dz + \int \tilde{N}(z)~dq=c,\end{align} where $\tilde{N}(z)$ is equal to that part of the expression $N(z,q)$, which is independent of $q$. Upon simplification of the above equation, we get
	\begin{align} \label{sol1} 
	q(z)=\dfrac{\left(\dfrac{\beta(1-\alpha)}{n z^{1/n}}\int_{0}^{z}h(t) t^{(1/n)-1} dt\right)^{\tfrac{1}{1-\alpha}}-\gamma}{\beta}.
	\end{align}
	Similarly, we can show that  $\psi_2(p(z), z p'(z)) \prec h(z)$ is an exact differential subordination. Interestingly the $q$, we obtained in (\ref{sol1}), coincide with the best dominant obtained by the theory of differential subordinations \cite{miler2}.	This is illustrated in the subsequent section. Further, we show that the dominant obtained in each of the cases is convex univalent by using the concept of admissibility condition~\eqref{admcon}, which ultimately generalizes and improves some of the earlier known results. We also give some specific examples to justify our claims. In addition, we estimate the best dominant for different choices of $h$. Also, we find a criteria for univalence of functions in $\mathcal{H}$ as an application to our results. Further, using this univalence criteria, we obtain a few more results. 
	
	In what follows, let us presume convex to mean convex univalent.

	\section{Main Results} 
We need the following Hallenbeck and Ruscheweyh \cite{halen} result with $\gamma=1$  to prove our main results.     
\begin{lem}\emph{\cite{halen}}\label{halen}
	Let $h$ be convex in $\mathbb{D}$ having $h(0)=a$. If $P \in \mathcal{H}[a,n]$ satisfies
	\begin{equation*}
	P(z)+z P'(z) \prec h(z),
	\end{equation*}
	then $P(z)\prec Q(z) \prec h(z)$, where 
	\begin{align}\label{Q}
	Q(z)=\dfrac{1}{n z^{1/n}}\int_{0}^{z} h(t) t^{(1/n)-1} dt,		\end{align}
	is convex and is the best $(a,n)-$dominant.
\end{lem}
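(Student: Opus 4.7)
The plan is to treat $Q$ as the unique $\mathcal{H}[a,n]$-solution of the first-order linear ODE
$$Q(z)+nzQ'(z)=h(z),\qquad Q(0)=a,$$
which one verifies by differentiating the identity $nz^{1/n}Q(z)=\int_0^z h(t)t^{1/n-1}dt$. The change of variable $t=zu^n$ rewrites
$$Q(z)=\int_0^1 h(zu^n)\,du,$$
so $Q(\mathbb{D})$ is a continuous convex combination of points in the convex set $h(\mathbb{D})$; hence $Q(\mathbb{D})\subseteq h(\mathbb{D})$ and, since $Q(0)=h(0)=a$, this at once yields $Q\prec h$.

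The main obstacle is establishing that $Q$ itself is convex univalent (the class of convex univalent functions is not convex in general, so $Q\prec h$ does not give this for free). For this I would differentiate the ODE once more to obtain
$$\frac{h'(z)}{Q'(z)}=(n+1)+\frac{nzQ''(z)}{Q'(z)}=1+n\left(1+\frac{zQ''(z)}{Q'(z)}\right),$$
so convexity of $Q$ via Definition \ref{defcon} and Lemma \ref{extcon} reduces to showing $\RE(h'(z)/Q'(z))>1$ in $\mathbb{D}$. Using the Hadamard representation
$$Q(z)=a+\sum_{k\geq 1}\frac{c_k\,z^k}{nk+1}=h(z)*\phi(z),\quad\phi(z)=\sum_{k\geq 0}\frac{z^k}{nk+1}=\int_0^1\frac{dt}{1-zt^n},$$
the required positivity then follows from the Ruscheweyh--Sheil-Small convolution theorem once one checks that the multiplier $\phi$ is convex (which is classical, e.g.\ by expressing $\phi$ as an integral of half-plane mappings $1/(1-zt^n)$ and invoking Herglotz-type averaging).

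With $Q$ convex, $Q\prec h$, and the ODE $Q+nzQ'=h$ in hand, the subordination $P\prec Q$ is obtained by the admissible-function method applied to $\psi(r,s;z):=r+s$. For $\zeta\in\partial\mathbb{D}\backslash E(Q)$, $r=Q(\zeta)$, $s=m\zeta Q'(\zeta)$ with $m\geq n$, one computes
$$\psi(r,s;z)=Q(\zeta)+m\zeta Q'(\zeta)=h(\zeta)+(m-n)\zeta Q'(\zeta),$$
and since $h$ is convex with $h(\zeta)\in\partial h(\mathbb{D})$, the vector $\zeta Q'(\zeta)$ points in the outward normal direction to $\partial h(\mathbb{D})$ at $h(\zeta)$, so the above value lies outside $\Omega=h(\mathbb{D})$. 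Thus $\psi\in\Psi_n[h,Q]$, and the standard subordination theorem (Definition \ref{adm}) delivers $P\prec Q$. Finally, $Q$ is the \emph{best} $(a,n)$-dominant because it actually satisfies $Q+nzQ'=h$: any other dominant $\tilde q$ of $P$ must therefore contain $Q(\mathbb{D})$ in its image, giving $Q\prec\tilde q$.
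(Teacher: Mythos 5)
The paper never proves this lemma---it is imported verbatim from Hallenbeck and Ruscheweyh---so your reconstruction can only be measured against the classical argument and against the techniques the paper uses elsewhere. Your overall architecture is the right one: the ODE $Q(z)+nzQ'(z)=h(z)$, the averaging formula $Q(z)=\int_0^1h(zu^n)\,du$ giving $Q\prec h$, a convolution argument for convexity of $Q$, the admissibility method for $P\prec Q$, and best dominance from the fact that $Q$ solves the associated equation (for the last point you should add the standard observation that $Q(z^n)$ is itself a solution of the subordination lying in $\mathcal{H}[a,n]$ with image $Q(\mathbb{D})$; that is what forces every dominant to contain $Q(\mathbb{D})$).

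Two steps, however, are justified by arguments that do not work as stated. First, the convexity of the multiplier: Ruscheweyh--Sheil-Small applies to normalized convex functions, so what you need is that $\sum_{k\geq1}\tfrac{n+1}{nk+1}z^k=\sum_{k\geq1}\tfrac{1+\gamma}{k+\gamma}z^k$ (with $\gamma=1/n$) is convex. This is true and is precisely the auxiliary lemma of Hallenbeck--Ruscheweyh, but your proposed justification---writing $\phi$ as an average of the maps $1/(1-zt^n)$ and ``invoking Herglotz-type averaging''---only yields $\RE\phi>1/2$; averages of convex maps need not be convex, which is exactly the obstruction you yourself flagged when declining to deduce convexity of $Q$ from $Q\prec h$. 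As written this step is internally inconsistent: either cite the Hallenbeck--Ruscheweyh lemma directly or verify $\RE\left(1+zg''(z)/g'(z)\right)>0$ for $g(z)=\int_0^1\tfrac{zt^{\gamma}}{1-tz}\,dt$ by an honest computation. Second, in the admissibility step, $\zeta Q'(\zeta)$ is the outward normal to $\partial Q(\mathbb{D})$ at $Q(\zeta)$, not to $\partial h(\mathbb{D})$ at $h(\zeta)$, so the stated geometric reason is incorrect. The repair is the argument the paper itself uses for $\psi_{00}$ in Theorems \ref{mainthm1} and \ref{mainthm3}: since $n\zeta Q'(\zeta)=h(\zeta)-Q(\zeta)$, the admissibility value equals $Q(\zeta)+\tfrac{m}{n}\left(h(\zeta)-Q(\zeta)\right)$ with $m/n\geq1$, a point on the ray from $Q(\zeta)\in\overline{h(\mathbb{D})}$ through the boundary point $h(\zeta)$ at or beyond $h(\zeta)$, hence outside the convex set $h(\mathbb{D})$.
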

The following theorem deals in finding the best dominant of the exact type differential subordination:
\begin{thm}\label{mainthm1}
	Let $\beta(\neq0)$, $\gamma$  be complex numbers, $-1\leq \alpha \leq 0$ and $h$ be a non-vanishing convex function  with $h(0)=a$. Let $a_0=((\beta (1-\alpha)a)^{1/(1-\alpha)}-\gamma)/\beta$ and $p \in \mathcal{H}[a_0,n]$ satisfies
	\begin{equation}\label{main2}
	\dfrac{(\beta p(z)+\gamma)^{1-\alpha}}{\beta(1-\alpha)}+\dfrac{ z p'(z)}{(\beta p(z)+\gamma)^\alpha} \prec h(z),
	\end{equation}
	then $p(z) \prec q(z) \prec H(z)$, where $H(z):= ((\beta(1-\alpha)h(z))^{\tfrac{1}{1-\alpha}}-\gamma)/\beta,$ 
	\begin{equation}\label{sol}
	q(z)=	\dfrac{\left(\dfrac{\beta(1-\alpha)}{n z^{1/n}}\int_{0}^{z}h(t) t^{(1/n)-1} dt\right)^{\tfrac{1}{1-\alpha}}-\gamma}{\beta},
	\end{equation}
	are convex in $\mathbb{D}$ and $q$ is the best $(a_0,n)$- dominant.
\end{thm}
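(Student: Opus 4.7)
The plan is to pull the subordination \eqref{main2} back to the classical Hallenbeck--Ruscheweyh form by an algebraic substitution, invoke Lemma~\ref{halen} there, and push the resulting dominant back through the inverse substitution; the only nontrivial ingredient beyond bookkeeping is the convexity of $q$ and $H$, for which I would use the admissibility machinery recalled in Section~\ref{intro}.

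\textbf{Step 1 (substitution).} I set $P(z):=(\beta p(z)+\gamma)^{1-\alpha}/(\beta(1-\alpha))$, picking the branch of the fractional power so that $P(0)=a$. The definition of $a_{0}$ is engineered precisely so that $(\beta a_{0}+\gamma)^{1-\alpha}=\beta(1-\alpha)a$, whence $P\in\mathcal{H}[a,n]$. A routine chain-rule calculation gives $zP'(z)=zp'(z)/(\beta p(z)+\gamma)^{\alpha}$, and therefore $P(z)+zP'(z)$ coincides with the left-hand side of \eqref{main2}; the hypothesis then reads $P(z)+zP'(z)\prec h(z)$.

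\textbf{Step 2 (transfer).} By Lemma~\ref{halen}, $P\prec Q\prec h$ with the explicit $Q$ of \eqref{Q}; $Q$ is convex and is the best $(a,n)$-dominant of $P$. I set
\begin{equation*}
T(w):=\frac{(\beta(1-\alpha)w)^{1/(1-\alpha)}-\gamma}{\beta}.
\end{equation*}
Since $h$ is non-vanishing and $Q\prec h$, the argument of the fractional power stays off the origin on $h(\mathbb D)$, so a single-valued analytic branch of $T$ is available and $T$ is univalent on a neighborhood of $h(\mathbb D)$. Composing preserves subordinations, giving $p=T\circ P \prec T\circ Q=q \prec T\circ h=H$, with $q$ matching \eqref{sol}. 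The best-dominant property transfers from $Q$ to $q$ via the bijectivity of $T$: any dominant of $p$ pulls back under $T^{-1}$ to a dominant of $P$, is therefore dominated by $Q$, and hence its $T$-image dominates $q$.

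\textbf{Step 3 (convexity --- the main obstacle).} By Lemma~\ref{extcon} it suffices to prove $\RE(1+zq''(z)/q'(z))>0$, and similarly for $H$. Logarithmic differentiation of $(\beta q+\gamma)^{1-\alpha}=\beta(1-\alpha)Q$ gives
\begin{equation*}
1+\dfrac{zq''(z)}{q'(z)}=\left(1+\dfrac{zQ''(z)}{Q'(z)}\right)+\dfrac{\alpha}{1-\alpha}\dfrac{zQ'(z)}{Q(z)},
\end{equation*}
with the strictly analogous identity relating $H$ to $h$. The first bracket has positive real part by convexity of $Q$ (respectively $h$). To absorb the correction term I would set $\varphi(z):=1+zq''(z)/q'(z)$, differentiate the defining identity $\psi_{1}(q,nzq')=h$ once more, and rewrite the outcome as a subordination $\Psi(\varphi(z),z\varphi'(z);z)\prec h^{\ast}(z)$ for a suitable $\Psi$ and a convex $h^{\ast}$ with $h^{\ast}(\mathbb D)\subset\{\RE w>0\}$; the admissibility criterion of the class $\Psi_{n}\{1\}$ recalled in Section~\ref{intro}, together with the range restriction $-1\leq\alpha\leq 0$ (which forces $\alpha/(1-\alpha)\in[-1/2,0]$) and the non-vanishing of $h$, would then yield $\RE\varphi>0$. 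The hardest step will be the admissibility inequality $\Psi(\rho i,\sigma;z)\notin\{\RE w>0\}$ itself, and I expect the extremal case $\alpha=-1$ --- where $1/(1-\alpha)=1/2$ forces a square-root branch and the correction term is largest --- to be the stress test for the argument.
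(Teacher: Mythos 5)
Your Steps 1 and 2 are correct and in fact partly cleaner than the paper's own route. The substitution $P=(\beta p+\gamma)^{1-\alpha}/(\beta(1-\alpha))$ and the reduction to $P+zP'\prec h$ is exactly what the paper does. Where you differ is afterwards: the paper establishes $p\prec H$ by verifying $\psi\in\Psi_n[h,H]$ through the outer-normal argument, and proves best dominance by applying Miller--Mocanu's Theorem 2.3f to $\psi\in\Psi_n[h,q]$, using the identity $\psi(q(\zeta),m\zeta q'(\zeta))=Q(\zeta)+\tfrac{m}{n}\bigl(h(\zeta)-Q(\zeta)\bigr)\notin h(\mathbb D)$. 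You instead push everything through the change of variable $T(w)=((\beta(1-\alpha)w)^{1/(1-\alpha)}-\gamma)/\beta$; since subordination is preserved under post-composition with an analytic map, $p\prec q\prec H$ follows immediately, and the best-dominant property transfers provided you actually justify that $T$ is univalent on a simply connected neighbourhood of $h(\mathbb D)$ (a dominant must be univalent, so $T^{-1}\circ\tilde q$ must be shown univalent before Lemma~\ref{halen} can be invoked on it). That univalence is plausible because $h(\mathbb D)$ is convex and omits the origin, but it is asserted, not proved, and it is the price you pay for bypassing the admissibility computation.

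The genuine gap is Step 3, which you yourself flag as the main obstacle and then leave as a programme rather than a proof. You correctly derive
\begin{equation*}
1+\dfrac{zq''(z)}{q'(z)}=\Bigl(1+\dfrac{zQ''(z)}{Q'(z)}\Bigr)+\dfrac{\alpha}{1-\alpha}\,\dfrac{zQ'(z)}{Q(z)},
\end{equation*}
but the proposed continuation --- differentiate $\psi_1(q,nzq')=h$ again and manufacture a second-order subordination $\Psi(\varphi,z\varphi';z)\prec h^{\ast}$ --- is never instantiated: no $\Psi$, no $h^{\ast}$, and no admissibility inequality are produced, so nothing absorbs the correction term. The paper's actual argument stays at first order: it sets $s(z)=1+zq''(z)/q'(z)$, writes $1+zQ''/Q'=\tilde\psi(s(z);z):=s(z)+A(z)$ with $A(z)=\tfrac{\alpha}{\alpha-1}\tfrac{zQ'}{Q}$, notes $\RE\tilde\psi(s(z);z)>0$ by convexity of $Q$, and applies \cite[Theorem 2.3i]{miler2}, for which the whole burden is the single admissibility inequality $\RE\tilde\psi(i\rho;z)=\RE A(z)\not>0$, i.e.\ a sign condition on $\RE\bigl(zQ'(z)/Q(z)\bigr)$ exploiting $\tfrac{\alpha}{\alpha-1}\in[0,1/2]$ and the non-vanishing of $Q$. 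That single inequality is precisely the content your proposal is missing; without it (or a worked-out substitute) the convexity of $q$ and $H$ --- and hence a substantive part of the theorem's conclusion --- remains unproved.
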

\begin{proof}
	We first prove that $q$ and $H$ are convex in $\mathbb{D}.$
	Since the function $h$ here satisfies the conditions of $h$ of Lemma~\ref{halen}, the expression~\eqref{sol} becomes 
	\begin{equation}\label{Q1}
	q(z)  =\dfrac{\left(\beta(1-\alpha)Q(z)\right)^{1/(1-\alpha)}-\gamma}{\beta},
	\end{equation} where $Q$ is given by~\eqref{Q}. Thus $q$ is well-defined and analytic in $\mathbb{D}$ as $\beta \neq 0$. Let $P(z)=\tfrac{(\beta p(z)+\gamma)^{1-\alpha}}{\beta(1-\alpha)}$, then $P \in \mathcal{H}[a,n]$ and the subordination~\eqref{main2} becomes $P(z) + zP'(z) \prec h(z)$. Now from Lemma~\ref{halen}, we have $P(z) \prec Q(z) \prec h(z)$, which implies  $(\beta p(z)+\gamma)^{1-\alpha} \prec \beta (1-\alpha)Q(z) \prec \beta (1-\alpha)h(z).$  Since $h(z)$ does not vanish in $\mathbb{D}$, we have $q'(z)\neq 0$ and therefore $ 1+z q''(z)/q'(z) =:s(z)$  is analytic in $\mathbb{D}$. Logarithmic differentiation of~\eqref{Q1} yields
	\begin{equation}\label{Qcon}
	1+\dfrac{z Q''(z)}{Q'(z)}=1+\dfrac{z q''(z)}{q'(z)} + A(z)=:\tilde{\psi}(s(z);z),
	\end{equation}
	where 
	\begin{equation*}
	A(z)=-\alpha \beta \dfrac{z q'(z)}{\beta q(z)+\gamma}=\dfrac{\alpha}{\alpha-1} \dfrac{z Q'(z)}{Q(z)}
	\end{equation*} 
	and we can write~\eqref{Qcon} as $\tilde{\psi}(r)= r +A(z)$. By Lemma~\ref{halen}, the function $Q$ is convex, thus from Lemma~\ref{extcon}, we have $\RE \tilde{\psi}(s(z)) >0$. Now to prove $q$ is convex, by Lemma \ref{extcon}, it suffices to show $\RE s(z) >0$. To accomplish this task, we use \cite[Theorem 2.3i, p. 35]{miler2} and so we need to show that $\tilde{\psi} \in \Psi_n\{1\}$. Further, by using the admissibility condition for the same, it is equivalent to prove that $\RE(\tilde{\psi}(\rho i;z)) \not> 0$. From equation (\ref{Qcon}), we obtain $$\RE(\tilde{\psi}(\rho i))= \RE(\rho i + A(z)) = \RE A(z).$$ Using the fact that $Q(z) \neq 0$ for all $z \in \mathbb{D}$, we get $zQ'(z)/Q(z)$ is analytic in $\mathbb{D}$ and since $P(z) \prec Q(z)$, we have $Q(0)=a(\neq 0)$, which implies \begin{equation}\label{Q0} \dfrac{z Q'(z)}{Q(z)}\bigg|_{z=0}=0\end{equation} and also we have $Q'(z) \neq 0 $ for all $z$, therefore the quantity, given in~\eqref{Q0} lies on either side of the imaginary axis. So clearly we have $\RE A(z) \not >0$. Thus, $q$ is convex in $\mathbb{D}$. Similarly, we can prove the function $H$ is convex in $\mathbb{D}$, since $zh'(z)/h(z)$ $(h(0)=a)$ and $zQ'(z)/Q(z)$ behave alike. We now proceed to show $p(z) \prec q(z) \prec H(z)$ and $q$ is the best $(a_0,n)$- dominant. 
	Let $\psi(r,s):= \tfrac{(\beta r+\gamma)^{1-\alpha}}{\beta (1-\alpha)}+\tfrac{s}{(\beta r+\gamma)^{\alpha}}$, which corresponds to left hand side of the subordination~\eqref{main2}. First we show $p \prec H$ by proving $\psi \in \Psi_n[h,H]$, or equivalently
	\begin{equation*}
	\psi_0:=\psi(H(\zeta), m \zeta H'(\zeta))= \dfrac{(\beta H(\zeta)+\gamma)^{1-\alpha}}{\beta(1-\alpha)}
	+\dfrac{m\zeta H'(\zeta)}{(\beta H(\zeta)+\gamma)^{\alpha}} \notin h(\mathbb{D}),	
	\end{equation*}
	whenever $|\zeta|=1$ and $m\geq n$. From hypothesis, replacing $H$ by its expression in terms of $h$, in $\psi_0$, we obtain
	\begin{equation*}
	\left|\arg\left(\dfrac{\psi_0-h(\zeta)}{\zeta h'(\zeta)}\right)\right| = |\arg m|< \pi/2.
	\end{equation*}
	As the function $h(\mathbb{D})$ is convex and $m\geq 1$, $h(\mathbb{\zeta}) \in h(\partial\mathbb{D})$ and $\zeta h'(\zeta)$ is the outer normal to $h(\partial \mathbb{D})$ at $h(\zeta)$, we arrive at the conclusion that $\psi_0 \notin h(\mathbb{D})$ and therefore we obtain $p(z) \prec H(z)$. Now, we show $p \prec q$. A simple calculation shows that $q$, given by (\ref{sol}) satisfies the following differential equation
	\begin{equation}\label{qh11}
	\dfrac{(\beta q(z)+\gamma)^{1-\alpha}}{\beta(1-\alpha)}+\dfrac{ n z q'(z)}{(\beta q(z)+\gamma)^{\alpha}} = \psi(q(z),nzq'(z))= h(z).
	\end{equation}
	Now we apply \cite[Theorem 2.3f, pp.32]{miler2} to show $q$ is the best dominant. Without loss of generality, we can assume $h$ and $q$ are analytic and univalent on $\overline{\mathbb{D}}$ and $q'(\zeta) \neq 0$ for $|\zeta| =1$, which shows that $q \in \widetilde{Q}$. To complete the proof, we now show $\psi \in \Psi_n[h,q]$. This is equivalent to show that
	\begin{equation*}
	\psi_{00}:= \psi(q(\zeta), m\zeta q'(\zeta)) = \dfrac{(\beta q(\zeta)+\gamma)^{1-\alpha}}{\beta(1-\alpha)} + \dfrac{m \zeta q'(\zeta)}{(\beta q(\zeta)+\gamma)^{\alpha}} \notin h(\mathbb{D}),
	\end{equation*}
	whenever $|\zeta|=1$ and $m \geq n$. Using equations~(\ref{sol}) and (\ref{qh11}), we obtain
	\begin{equation*}
	Q(\zeta)+\dfrac{m}{n}\left(h(\zeta)-Q(\zeta)\right)=\psi_{00}.
	\end{equation*} 
	Since we have $Q(z) \prec h(z)$ from Lemma~\ref{halen} and also we know $m/n \geq 1$, it is easy to observe that $\psi_{00} \notin h(\mathbb{D})$. This completes the proof.
\end{proof}
Let $n=1$, then the best dominant $q$, given in~\eqref{sol} is same as the solution~\eqref{sol1} obtained by solving its associated exact differential equation.
By taking $\beta=1$ and $\gamma=0$ in Theorem \ref{mainthm1}, we get the following result:
\begin{cor}\label{cor1}
	Let $-1\leq \alpha \leq 0$, $h$ be non vanishing convex function in $\mathbb{D}$ with $h(0)=a$ and $a_0=((1-\alpha)a)^{1/(1-\alpha)}$. If $p \in \mathcal{H}[a_0,n]$ satisfies
	\begin{equation}\label{main1}
	\dfrac{p^{1-\alpha}(z)}{1-\alpha}
	+\dfrac{ z p'(z)}{p^{\alpha}(z)} \prec h(z),\end{equation}
	then $p(z) \prec q(z) \prec H(z)$, where   $H(z):=((1-\alpha)h(z))^{\tfrac{1}{1-\alpha}}$,  
	\begin{equation}\label{q}
	q(z)= \left(\dfrac{(1-\alpha)}{n z^{1/n}}\int_{0}^{z}h(t) t^{(1/n)-1} dt\right)^{\tfrac{1}{1-\alpha}}.
	\end{equation}
	are convex in $\mathbb{D}$ and $q$ is the best $(a_0,n)$- dominant.
\end{cor}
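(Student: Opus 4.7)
The plan is to obtain this result as an immediate specialization of Theorem \ref{mainthm1}, so the proof should be a short verification rather than an independent argument. I would set $\beta=1$ and $\gamma=0$ in Theorem \ref{mainthm1}; these choices satisfy the standing hypotheses $\beta\neq 0$, $\gamma\in\mathbb{C}$, and none of the remaining hypotheses on $\alpha$ or on $h$ require adjustment.

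First I would check that the left-hand side of \eqref{main2} collapses to the left-hand side of \eqref{main1} under the substitution: the expression $(\beta p+\gamma)^{1-\alpha}/(\beta(1-\alpha)) + zp'/(\beta p+\gamma)^{\alpha}$ becomes $p^{1-\alpha}/(1-\alpha) + zp'/p^{\alpha}$. Next I would verify that the auxiliary quantities simplify correctly: $a_0 = ((\beta(1-\alpha)a)^{1/(1-\alpha)} - \gamma)/\beta$ reduces to $((1-\alpha)a)^{1/(1-\alpha)}$; the outer dominant $H$ reduces to $((1-\alpha)h(z))^{1/(1-\alpha)}$; and the formula \eqref{sol} for $q$ collapses to \eqref{q}. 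These are purely algebraic substitutions.

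With the hypothesis of Theorem \ref{mainthm1} matched and the conclusion translated, the subordination chain $p(z)\prec q(z)\prec H(z)$, the convexity of $q$ and $H$ in $\mathbb{D}$, and the fact that $q$ is the best $(a_0,n)$-dominant all transfer directly. There is essentially no obstacle; the only point warranting a quick remark is that since $h$ is non-vanishing and $a\neq 0$ forces $a_0\neq 0$, the branches of $p^{1-\alpha}$ and $p^{\alpha}$ used in \eqref{main1} are well defined and consistent with the specialization $\gamma=0$ of the corresponding branches in Theorem \ref{mainthm1}.
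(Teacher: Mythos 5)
Your proposal is correct and matches the paper exactly: the corollary is obtained by setting $\beta=1$ and $\gamma=0$ in Theorem \ref{mainthm1}, and your algebraic verifications of how $a_0$, $H$, and $q$ specialize are exactly what is needed. No further comment is required.
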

In  Corollary \ref{cor1}, if we choose $\alpha=-1$ and $h$ as any convex function with $h(0)=1$ and $\RE h(z) >0$, then we obtain the following result:
\begin{cor}\label{cor2}
	Let $h$ be a convex function with $h(0)=1$ and $\RE h(z) >0$. If $p \in \mathcal{H}[1,n]$ satisfies the following
	\begin{equation*}
	p^2(z)+2 z p(z)  p'(z) \prec h(z),
	\end{equation*}
	then \begin{equation}\label{mil}
	p(z) \prec q(z) =\sqrt{Q(z)}\text{,~where } Q(z)=\dfrac{1}{n z^{1/n}}\int_{0}^{z}h(t) t^{(1/n)-1}dt.\end{equation}  The function $q$ is convex and best $(1,n)$- dominant. 
\end{cor}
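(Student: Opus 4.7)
The plan is to derive Corollary~\ref{cor2} as a direct specialization of Corollary~\ref{cor1}. First I would set $\alpha=-1$ in Corollary~\ref{cor1}, which turns the left-hand side of~\eqref{main1} into $\tfrac{p^{2}(z)}{2}+zp(z)p'(z)$. Since the hypothesis in Corollary~\ref{cor2} reads $p^{2}(z)+2zp(z)p'(z)\prec h(z)$, I would divide through by $2$ and rewrite it equivalently as $\tfrac{p^{2}(z)}{2}+zp(z)p'(z)\prec \tilde{h}(z)$, where $\tilde{h}:=h/2$. This is precisely the form required by Corollary~\ref{cor1}.

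Next, I would verify that $\tilde{h}$ satisfies every hypothesis of Corollary~\ref{cor1}. Being a positive scalar multiple of the convex function $h$, the function $\tilde{h}$ is itself convex; the assumption $\RE h>0$ gives $\RE \tilde{h}>0$, so in particular $\tilde{h}$ is non-vanishing on $\mathbb{D}$; and $\tilde{h}(0)=1/2$. The compatibility index prescribed by Corollary~\ref{cor1} then becomes
\[
a_{0}=((1-\alpha)\tilde{h}(0))^{1/(1-\alpha)}=\bigl(2\cdot\tfrac{1}{2}\bigr)^{1/2}=1,
\]
which matches the stated hypothesis $p\in\mathcal{H}[1,n]$ in Corollary~\ref{cor2}.

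Applying Corollary~\ref{cor1} would then produce $p(z)\prec q(z)$, with
\[
q(z)=\bigl(2\tilde{Q}(z)\bigr)^{1/2}, \qquad \tilde{Q}(z)=\dfrac{1}{nz^{1/n}}\int_{0}^{z}\tilde{h}(t)\,t^{(1/n)-1}\,dt=\tfrac{1}{2}Q(z),
\]
so the two factors of $\tfrac{1}{2}$ and $2$ cancel to yield exactly $q(z)=\sqrt{Q(z)}$, as claimed in~\eqref{mil}. Convexity of $q$ and its status as the best $(1,n)$-dominant are inherited directly from Corollary~\ref{cor1}. There is essentially no analytic obstacle in this argument; the only subtlety worth flagging is the well-definedness of the principal square root in $q=\sqrt{Q}$. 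This is guaranteed by $\RE h>0$: since $Q$ is a weighted average of values of $h$, we have $\RE Q>0$, so $Q$ remains in the open right half-plane where the principal branch of $\sqrt{\cdot}$ is single-valued and analytic.
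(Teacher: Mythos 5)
Your proposal is correct and follows essentially the same route as the paper, which obtains Corollary~\ref{cor2} by specializing Corollary~\ref{cor1} to $\alpha=-1$. In fact you supply a detail the paper leaves implicit: since $\alpha=-1$ turns the left side of~\eqref{main1} into $\tfrac{p^2}{2}+zpp'$, the rescaling $h\mapsto h/2$ (with the resulting cancellation $q=(2\cdot\tfrac{1}{2}Q)^{1/2}=\sqrt{Q}$ and $a_0=1$) is exactly what is needed to match the stated form, and your check that $\RE Q>0$ justifies the principal square root.
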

\begin{rem}
	The Corollary~\ref{cor2} improves the already known result of Miller and Mocanu \cite[Theorem 3.1e., p. 77]{miler2}, by additionally establishing $q$, given in (\ref{mil}) is convex in $\mathbb{D}$ apart from being the best dominant. 
	Moreover the Corollary \ref{cor1} generalizes the known result \cite[Theorem 3.1e., p. 77]{miler2} for all $\alpha$ $(-1\leq \alpha\leq 0)$ and extends the same by proving the convexity of the function $q$ as well. 
\end{rem}
For requisite knowledge about the Hypergeometric functions, one may refer \cite[p. 5-7]{miler2}. We now obtain the following corollary for different choices of $h$ in Theorem~\ref{mainthm1}, when $\beta$ is a real  number
\begin{cor}\label{cora1}
	Let $-1 \leq \alpha \leq 0$, $\beta > 0$ and $\gamma$ be a complex number. If $p \in \mathcal{H}[(\beta (1-\alpha))^{1/(1-\alpha)}-\gamma)/\beta,n]$ and  $\psi_1(p(z),zp'(z))$, given in \eqref{subord1} satisfies any of the following:
	\begin{itemize}
		\item[(i)] $\psi_1 \prec (1+Az)/(1+Bz)$, $-1\leq B<A \leq 1$, with\\ $\lambda_1:={}_2F_1\left(1,1/n,1+1/n;B\right)- \tfrac{A}{n+1}~{}_2F_1\left(1,1+1/n,2+1/n;B\right)$\\
		\item[(ii)] $\psi_1 \prec  e^{\mu z}\text{, } (|\mu| \leq 1)$, with $\lambda_2:={}_1F_1\left(1/n,1/n+1;-\mu \right)$\\
		\item[(iii)]  $\psi_1 \prec \sqrt{1+\kappa z},$ $\kappa \in [0,1]$, with  $\lambda_3:={}_2F_1\left(-1/2,1/n,1+1/n;\kappa\right)$  \\
		\item[(iv)] $-\tfrac{\rho_2 \pi}{2}<\arg \psi_1 < \tfrac{\rho_1 \pi}{2},$ where $\rho =\tfrac{\rho_1-\rho_2}{\rho_1+\rho_2}$, $\rho'=\tfrac{\rho_1+\rho_2}{2}$, $0<\rho_1,\rho_2 \leq 1$, and $c=e^{\rho \pi i}$, with  $\lambda_4:=\mathlarger{\sum}_{k=0}^{\infty}\left(\tfrac{\left(\begin{tiny}\begin{matrix} \rho' \\ k \end{matrix}\end{tiny}\right)(-c)^k {}_2F_1(\rho',1/n+k,1+1/n+k;-1)}{1+nk}\right),$	
	\end{itemize}
	then respectively for the above parts $(i)-(iv)$, we have $\RE p(z) > \zeta_i(\alpha,\beta,\gamma,\lambda_i)$, where $$\zeta_i(\alpha,\beta,\gamma,\lambda_i)= \RE (((\beta (1-\alpha)\lambda_i)^{1/(1-\alpha)}-\gamma)/\beta)$$ with $i$ assuming appropriate integral value between $1$ and $4$.	The result is sharp.\end{cor}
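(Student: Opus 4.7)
The plan is to apply Theorem~\ref{mainthm1} to reduce each case to an explicit evaluation of the Hallenbeck--Ruscheweyh dominant at $z=-1$. Since each of the four choices of $h$ in (i)--(iv) is a non-vanishing convex function with $h(0)=1$, Theorem~\ref{mainthm1} applies with $a=1$, yielding $p(z)\prec q(z)$, where
\[
q(z)=\frac{(\beta(1-\alpha)Q(z))^{1/(1-\alpha)}-\gamma}{\beta},\qquad Q(z)=\frac{1}{nz^{1/n}}\int_{0}^{z}h(t)t^{1/n-1}\,dt.
\]
Because $q$ is convex in $\mathbb{D}$ (Theorem~\ref{mainthm1}) and each candidate $h$ possesses a reflection symmetry in parts (i)--(iii) or rotational symmetry in part (iv) that is inherited by $Q$, the leftmost point of $q(\mathbb{D})$ is the boundary limit $q(-1)$. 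The proof therefore reduces to verifying $Q(-1)=\lambda_{i}$ in each case, after which the desired estimate reads $\RE p(z)>\RE q(-1)=\zeta_{i}(\alpha,\beta,\gamma,\lambda_{i})$.

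Computation of $Q(-1)=(1/n)\int_{0}^{1}h(-u)u^{1/n-1}\,du$ in each case hinges on recognizing the integrand as the kernel of a hypergeometric integral representation. In (i), the split $(1-Au)/(1-Bu)=1/(1-Bu)-Au/(1-Bu)$ produces two integrals which are Euler's representations of ${}_{2}F_{1}(1,1/n;1+1/n;B)$ and ${}_{2}F_{1}(1,1+1/n;2+1/n;B)$ respectively, and combining them yields $\lambda_{1}$. In (ii), the integral $\int_{0}^{1}e^{-\mu u}u^{1/n-1}\,du$ is Kummer's integral representation of ${}_{1}F_{1}(1/n;1+1/n;-\mu)$, producing $\lambda_{2}$. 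In (iii), $\int_{0}^{1}(1-\kappa u)^{1/2}u^{1/n-1}\,du$ is Euler's representation of ${}_{2}F_{1}(-1/2,1/n;1+1/n;\kappa)$, producing $\lambda_{3}$. In (iv), the conformal map $h$ onto the sector can be written in a factored form whose outer factor $(1-cz)^{\rho'}$ expands via Newton's binomial series as $\sum_{k}\binom{\rho'}{k}(-c)^{k}z^{k}$; substituting, interchanging sum and integration (justified by absolute convergence for $|c|\le 1$), and recognizing each residual $t$-integral as Euler's representation of ${}_{2}F_{1}(\rho',1/n+k,1+1/n+k;-1)$ reassembles to $\lambda_{4}$.

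Sharpness in each part follows by taking $p=q$, which saturates the subordination and realizes $\RE p(z)\to\zeta_{i}$ as $z\to-1^{-}$ along the real axis. The main obstacle I anticipate is part (iv): one must first pin down the precise representation of $h$ that conformally maps $\mathbb{D}$ onto the asymmetric sector $\{w:-\rho_{2}\pi/2<\arg w<\rho_{1}\pi/2\}$ with $h(0)=1$, then justify the termwise integration of the resulting Taylor series, and finally verify that even in the absence of reflection symmetry across $\mathbb{R}$ the leftmost point of the convex domain $q(\mathbb{D})$ is still attained as $z\to-1$. The remaining parts reduce to routine manipulation of Euler-type integrals once the reduction via Theorem~\ref{mainthm1} is in place.
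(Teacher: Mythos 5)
Your proposal matches the paper's proof essentially step for step: both reduce via Theorem~\ref{mainthm1} to computing $\min_{|z|\le 1}\RE Q(z)=Q(-1)=\lambda_i$ through the Euler/Kummer integral representations of the hypergeometric functions, with the binomial expansion and termwise integration in part (iv). Your added remarks on justifying that the minimum occurs at $z=-1$ (especially for the asymmetric sector in (iv)) and on sharpness via $p=q$ address points the paper simply asserts, but they do not change the route.
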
 
\begin{proof} Let $h(z)= (1+Az)/(1+Bz)$, $e^{\mu z}$, $\sqrt{1+\kappa z}$ and $((1+cz)/(1-z))^{\rho'}$, respectively for the parts $(i)-(iv)$. Then from Theorem \ref{mainthm1}, we have $\RE p(z) > \min_{|z|\leq 1} \RE q(z)$, where $q$ is given by (\ref{Q1}), whenever either of the above parts $(i)-(iv)$ holds. Therefore, it suffices to find the minimum value of real part of $Q$ in each of the parts, where $Q$ is given by \eqref{Q}.\\
	(i)	We have $h(z)= (1+Az)/(1+Bz)$, then from equation (\ref{Q}) we get \begin{align*}Q(z)&= \dfrac{1}{n z^{1/n}} \int_{0}^{z} \left(\dfrac{1+At}{1+Bt}\right) t^{(1/n)-1}dt\\&= \dfrac{1}{n}\int_{0}^{1}(1+Btz)^{-1}t^{(1/n)-1} dt+\dfrac{Az}{n}\int_{0}^{1}(1+Btz)^{-1} t^{1/n} dt\\&={}_2F_1\left(1,1/n,1+1/n;-Bz\right)+\dfrac{Az}{n+1}~{}_2F_1\left(1,1+1/n,2+1/n;-Bz\right).\end{align*}
	Now, we have $\min_{|z|\leq 1} \RE Q(z)= Q(-1)=\lambda_1$, where $$\lambda_1:={}_2F_1\left(1,1/n,1+1/n;B\right)- \dfrac{A}{n+1}~{}_2F_1\left(1,1+1/n,2+1/n;B\right).$$ Hence the result.\\
	(ii) We have $h(z)= \exp( \mu z)$, then from equation (\ref{Q}) we obtain \begin{align*}Q(z)&= \dfrac{1}{nz^{1/n}}\int_{0}^{z} e^{ \mu t} t^{1/n-1} dt\\&=\tfrac{1}{n}\int_{0}^{1}e^{\mu t z} t^{1/n-1}dt\\&={}_1F_1\left(1/n,1/n+1;\mu z\right).\end{align*} Clearly, $\min_{|z|\leq 1} \RE Q(z)= Q(-1) = \lambda_2$, where
	$$	\lambda_2:={}_1F_1\left(1/n,1/n+1;-\mu \right).$$ This completes the proof for part (ii).\\
	(iii) 	Let $h(z)= \sqrt{1+\kappa z}$, then from equation (\ref{Q}), we have \begin{align*}Q(z)&= \dfrac{1}{n z^{1/n}} \int_{0}^{z} \sqrt{1+\kappa t}~t^{(1/n)-1}dt\\&= \dfrac{1}{n}\int_{0}^{1}\sqrt{1+\kappa tz}~t^{(1/n)-1}dt\\&={}_2F_1\left(-1/2,1/n,1+1/n;-\kappa z\right).\end{align*}Thus, we have $\min_{|z|\leq 1} \RE Q(z)= Q(-1) = \lambda_3$, where $$\lambda_3:={}_2F_1\left(-1/2,1/n,1+1/n;\kappa\right)$$ and that completes the proof for part (iii).\\
	(iv) We have $h(z)= \left(\tfrac{1+cz}{1-z}\right)^{\rho'}$, then from equation (\ref{Q}), we have \begin{align*}Q(z)&= \dfrac{1}{n z^{1/n}} \int_{0}^{z}\left(\dfrac{1+ct}{1-t}\right)^{\rho'}~t^{(1/n)-1}dt\\&= \dfrac{1}{n z^{1/n}}\int_{0}^{z}\left(\sum_{k=0}^{\infty}\left(\begin{tiny}\begin{matrix} \rho' \\ k \end{matrix}\end{tiny}\right) c^k t^{\left(1/n+k-1\right)}(1-t)^{-\rho'}\right)dt\\&= \dfrac{1}{n z^{1/n}}\sum_{k=0}^{\infty}\left(\left(\begin{tiny}\begin{matrix} \rho' \\ k \end{matrix}\end{tiny}\right)c^k\int_{0}^{1}(tz)^{\left(1/n+k-1\right)}(1-tz)^{-\rho'}z dt\right)\\&=\dfrac{1}{n }\sum_{k=0}^{\infty}\left( \left(\begin{tiny}\begin{matrix} \rho' \\ k \end{matrix}\end{tiny}\right)(cz)^k\int_{0}^{1}t^{\left(1/n+k-1\right)}(1-tz)^{-\rho'}dt\right)\\&=\sum_{k=0}^{\infty}\left(\dfrac{\left(\begin{tiny}\begin{matrix} \rho' \\ k \end{matrix}\end{tiny}\right)(cz)^k {}_2F_1(\rho',1/n+k,1+1/n+k;z)}{1+nk}\right).\end{align*}
	Now, $\min_{|z|\leq 1} \RE Q(z)= Q(-1)=\lambda_4$, where $$\lambda_4:=\mathlarger{\sum}_{k=0}^{\infty}\left(\dfrac{ \left(\begin{tiny}\begin{matrix} \rho' \\ k \end{matrix}\end{tiny}\right)(-c)^k {}_2F_1(\rho',1/n+k,1+1/n+k;-1)}{1+nk}\right).$$ Hence the result.
\end{proof}
\begin{rem}
	If $A$ and $B$ are replaced with each other in Corollary \ref{cora1}(i), then $\min_{|z|\leq1} \RE Q(z) = Q(1)$. Further by taking $A= 2a-1$ $(a \in [0,1))$, $B=1$, $\alpha=-1$, $\beta=1$ and $\gamma=0$, the above result reduces to the result of Miller and Mocanu \cite[Corollary 3.1e.1, p. 79]{miler2}.  
\end{rem}
In the following theorem, we deal with another exact type differential subordination.
\begin{thm}\label{mainthm3}
	Let $\beta$, $\gamma$$(\neq 0)$ be complex numbers, $-1 \leq \alpha \leq 0$, $h$ be a non vanishing convex function in $\mathbb{D}$ with $h(0)=a$, $|\sqrt{\gamma \beta}h(z)|< \pi/2$ and $a_0=(\sqrt{\gamma/\beta}\tan(\sqrt{\gamma \beta} a))^{1/(1-\alpha)}$. If $p \in \mathcal{H}[a_0,n]$ satisfies  
	\begin{equation}\label{subord2}
	\dfrac{1}{\sqrt{\gamma \beta}}\arctan\left(\sqrt{\dfrac{\beta}{\gamma}}p^{1-\alpha}(z)\right)+\left(\dfrac{1-\alpha}{\beta p^{2 (1-\alpha)}(z)+\gamma}\right)\dfrac{z p'(z)}{p^{\alpha}(z)} \prec h(z),
	\end{equation}
	then $p(z) \prec q(z) \prec H(z)$, where $H(z)=(\sqrt{\gamma/\beta}\tan(\sqrt{\gamma \beta} h(z)))^{1/(1-\alpha)}$ and
	\begin{align}\label{q2}
	q(z) & =  \left(\sqrt{\dfrac{\gamma}{\beta}}\tan\left(\dfrac{\sqrt{\gamma \beta}}{n z^{1/n}}\int_{0}^{z}h(t)t^{(1/n)-1}dt\right)\right)^{\tfrac{1}{1-\alpha}}
	\end{align} are convex in $\mathbb{D}$ and $q$ is the best $(a_0,n)-$dominant.
\end{thm}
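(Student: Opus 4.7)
The plan is to parallel the proof of Theorem \ref{mainthm1} by finding a substitution that converts the subordination \eqref{subord2} into the first-order Hallenbeck--Ruscheweyh form $P(z)+zP'(z)\prec h(z)$. Specifically, I would set
\[
P(z) := \frac{1}{\sqrt{\gamma\beta}}\arctan\!\left(\sqrt{\frac{\beta}{\gamma}}\, p^{1-\alpha}(z)\right),
\]
and a direct application of the chain rule gives $zP'(z)=\bigl(\tfrac{1-\alpha}{\beta p^{2(1-\alpha)}(z)+\gamma}\bigr)\tfrac{zp'(z)}{p^{\alpha}(z)}$. The value $a_0=(\sqrt{\gamma/\beta}\tan(\sqrt{\gamma\beta}\,a))^{1/(1-\alpha)}$ is engineered precisely so that $P(0)=a$, and the bound $|\sqrt{\gamma\beta}\,a|<\pi/2$ (inherited from $|\sqrt{\gamma\beta}h(z)|<\pi/2$ at $z=0$) keeps the $\arctan$ single-valued; hence $P\in\mathcal{H}[a,n]$ and Lemma \ref{halen} delivers $P(z)\prec Q(z)\prec h(z)$ with $Q$ as in \eqref{Q}.

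To recover the dominants in the variable $p$, I would invert the substitution on both subordinations. Since $Q(\mathbb{D})\subseteq h(\mathbb{D})$ by univalence of $h$, the hypothesis $|\sqrt{\gamma\beta}h(z)|<\pi/2$ also controls $\sqrt{\gamma\beta}Q(\mathbb{D})$, so the map $w\mapsto\sqrt{\gamma/\beta}\tan(\sqrt{\gamma\beta}\,w)$ is analytic and univalent on both images. Applying this map to $P\prec Q\prec h$ and then extracting $(1-\alpha)$-th roots yields $p(z)\prec q(z)\prec H(z)$ with $q$ given by \eqref{q2} and $H$ as in the statement.

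The main obstacle will be the convexity of $q$ (and $H$). Here I would follow the template of Theorem \ref{mainthm1}: logarithmic differentiation of \eqref{q2} twice should produce an identity of the shape
\[
1+\frac{zQ''(z)}{Q'(z)} \;=\; 1+\frac{zq''(z)}{q'(z)} + A(z) \;=:\; \tilde{\psi}(s(z);z),
\]
where $s(z):=1+zq''(z)/q'(z)$ and $A(z)$ is analytic in $\mathbb{D}$, factoring through $zQ'(z)/Q(z)$ together with the extra contributions from the derivative of $\tan$ and the $1/(1-\alpha)$ power; in particular $A$ vanishes at $z=0$ exactly as in equation \eqref{Q0}. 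Since Lemma \ref{halen} ensures $Q$ is convex, Lemma \ref{extcon} gives $\RE\tilde{\psi}(s(z);z)>0$. The admissibility check $\tilde{\psi}\in\Psi_n\{1\}$ then reduces, exactly as in the proof of Theorem \ref{mainthm1}, to showing $\RE\tilde{\psi}(\rho i;z)=\RE A(z)\not>0$, which follows from the behaviour of $zQ'/Q$ at the origin. Invoking \cite[Theorem 2.3i, p.~35]{miler2} forces $\RE s(z)>0$, and Lemma \ref{extcon} concludes that $q$ is convex. The analogous argument with $Q$ replaced by $h$ handles $H$.

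Finally, to certify $q$ as the best $(a_0,n)$-dominant, I would verify $\psi_2\in\Psi_n[h,q]$ in the sense of \cite[Theorem 2.3f, p.~32]{miler2}. Evaluating $\psi_2(q(\zeta),m\zeta q'(\zeta))$ on $|\zeta|=1$ with $m\geq n$ and substituting the defining relation \eqref{qsubord} for $i=2$ should collapse the expression to $Q(\zeta)+(m/n)\bigl(h(\zeta)-Q(\zeta)\bigr)$ via the identification $\tfrac{1}{\sqrt{\gamma\beta}}\arctan(\sqrt{\beta/\gamma}\,q^{1-\alpha}(\zeta))=Q(\zeta)$. Since $Q\prec h$, $h(\mathbb{D})$ is convex, and $m/n\geq 1$, this convex combination cannot lie in $h(\mathbb{D})$, completing the verification.
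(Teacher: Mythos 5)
Your proposal follows essentially the same route as the paper: the substitution $P=\tfrac{1}{\sqrt{\gamma\beta}}\arctan(\sqrt{\beta/\gamma}\,p^{1-\alpha})$ reducing the problem to Lemma \ref{halen}, the logarithmic-differentiation-plus-admissibility argument for the convexity of $q$ and $H$, and the collapse of $\psi_2(q(\zeta),m\zeta q'(\zeta))$ to $Q(\zeta)+(m/n)(h(\zeta)-Q(\zeta))$ for best dominance via \cite[Theorem 2.3f]{miler2}. The only deviation is that you obtain $p\prec q\prec H$ by pushing the chain $P\prec Q\prec h$ through the inverse substitution, whereas the paper verifies $\psi\in\Psi_n[h,H]$ through the outer-normal condition and derives $p\prec q$ from the best-dominant theorem; your shortcut is legitimate (and arguably cleaner) provided the fractional power $w^{1/(1-\alpha)}$ stays single-valued, which the non-vanishing of $h$ (hence of $Q$ and of $\tan(\sqrt{\gamma\beta}\,Q)$) guarantees.
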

\begin{proof}
	We first show that $q$ and $H$ are convex in $\mathbb{D}$. As the function $h$ here satisfies the condition of $h$ of Lemma \ref{halen}, thus $q$ reduces to 
	\begin{equation}\label{Q2}
	q(z)= \left(\sqrt{\dfrac{\gamma}{\beta}}\tan\left(\sqrt{\gamma \beta} Q(z)\right)\right)^{\tfrac{1}{1-\alpha}},
	\end{equation} where $Q$ is given by \eqref{Q} and the function $q$ is well defined and analytic in $\mathbb{D}$. Let $P(z)= (1/\sqrt{\gamma \beta})\arctan\left(\sqrt{\tfrac{\beta}{\gamma}}p^{1-\alpha}(z)\right)$, then the subordination $(\ref{subord2})$ reduces to $P(z) + zP'(z) \prec h(z)$ and clearly $P \in \mathcal{H}[a,n]$. Now from Lemma~\ref{halen}, we have $P(z) \prec Q(z) \prec h(z)$, which is equivalent to $\arctan\left(\sqrt{\tfrac{\beta}{\gamma}}p^{1-\alpha}(z)\right) \prec \sqrt{\gamma \beta} Q (z) \prec \sqrt{\gamma \beta}h(z).$ We have $q'(z) \neq 0$ as $h(z) \neq 0$ for all $z \in \mathbb{D}$ and therefore $1+z q''(z)/q'(z)=:g(z)$ is analytic in $\mathbb{D}$. The logarithmic differentiation of the function $q$, given in (\ref{Q2}) yields
	\begin{equation}\label{Qcon2}
	1+\dfrac{z Q''(z)}{Q'(z)}=1+\dfrac{z q''(z)}{q'(z)} + B(z)=:\tilde{\psi}(g(z);z),
	\end{equation}
	where
	\begin{align}\label{B}
	B(z)&=-\left(\alpha+ 2 (1-\alpha) \beta \dfrac{ q^{2 (1-\alpha)}(z)}{\gamma+\beta q^{2 (1-\alpha)}(z)}\right) \dfrac{z q'(z)}{q(z)}\nonumber 
	\\& = \dfrac{\alpha}{\alpha-1}\left(\sum_{k=0}^{\infty}(-1)^k\dfrac{(2\sqrt{\gamma\beta}Q(z))^{2k}}{(2k+1)!}\right)^{-1}\left(\dfrac{z Q'(z)}{Q(z)}\right)\nonumber\\&\quad-2\sqrt{\gamma\beta}\tan(\sqrt{\gamma\beta}Q(z))zQ'(z).
	\end{align} 
	and we can write $\tilde{\psi}(r)= r +B(z)$. By Lemma~\ref{halen}, the function $Q$ is convex, thus from Lemma~\ref{extcon}, we have $\RE \tilde{\psi}(g(z)) >0$. In order to prove $q$ is convex, by Lemma~\ref{extcon}, it suffices to show $\RE g(z) >0$. Now, to achieve this, we use~\cite[Theorem 2.3i, p. 35]{miler2}, so then we only need to show $\tilde{\psi} \in \Psi_n\{1\}$. Further, by using the admissibility condition for the same, it is equivalent to show that $\RE(\tilde{\psi}(\rho i;z)) \not> 0$. From equation~\eqref{Qcon2}, we obtain $$\RE(\psi(\rho i))= \RE(\rho i + B(z)) = \RE B(z).$$ Now, using the fact $Q(z) \neq 0$ for all $z \in \mathbb{D}$, we get $B$ is analytic in $\mathbb{D}$. Since $P(z) \prec Q(z)$, we have $Q(z) \in \mathcal{H}[a,n]$, thus \begin{equation}\label{Q00}  \left(\sum_{k=0}^{\infty}(-1)^k\dfrac{(2\sqrt{\gamma\beta}Q(z))^{2k}}{(2k+1)!}\right)^{-1}\left(\dfrac{z Q'(z)}{Q(z)}\right)\bigg|_{z=0}=0\end{equation}and  \begin{equation}\label{Q01}\tan(\sqrt{\gamma\beta}Q(z))zQ'(z)|_{z=0}=0, \end{equation} which ensures $B(z) \equiv 0$ or lies on either side of the imaginary axis. Therefore, from equation~\eqref{B}, we have $\RE B(z) \not >0$, which proves $q$ is convex in $\mathbb{D}$. Now similarly, we can prove the function $H$ is convex in $\mathbb{D}$. We now show $p(z) \prec q(z) \prec H(z)$ and $q$ is the best $(a_0,n)$- dominant. Now, for this we assume
	$$\psi(r,s):= \dfrac{1}{\sqrt{\gamma \beta}}\arctan\left(\sqrt{\dfrac{\beta}{\gamma}}r^{1-\alpha}\right)+\dfrac{(1-\alpha) s}{r^{\alpha}(\beta r^{2 (1-\alpha)}+\gamma)},$$ which corresponds to left hand side of the subordination~\eqref{subord2}. Now first we show $p \prec H$ by proving $\psi \in \Psi_n[h,H]$, or equivalently 
	\begin{align*}
	& \dfrac{1}{\sqrt{\gamma \beta}}\arctan\left(\sqrt{\dfrac{\beta}{\gamma}}H^{1-\alpha}(\zeta)\right)+\dfrac{(1-\alpha) m \zeta H'(\zeta)}{H^{\alpha}(\zeta)(\beta H^{2 (1-\alpha)}(\zeta)+\gamma)} \\ & \quad = \psi(H(\zeta), m \zeta H'(\zeta)) =: \psi_0 \notin h(\mathbb{D}),\end{align*} whenever $|\zeta|=1$ and $m \geq n$. Now, replacing $H$ by its expression in terms of $h$ in $\psi_0$, we get
	$$\left| \arctan\left(\dfrac{\psi_0-h(\zeta)}{\zeta h'(\zeta)}\right)\right|=|\arg m| < \dfrac{\pi}{2}.$$
	Since the function $h$ is convex, $h(\zeta) \in h(\partial \mathbb{D})$ and $\zeta h'(\zeta)$ is the outer normal to $h(\partial \mathbb{D})$ at $h(\zeta)$, thus we obtain $\psi_0 \notin h(\mathbb{D})$, which further implies $p(z) \prec H(z)$. A computation shows that $q$, given by \eqref{q2} satisfies the differential equation
	\begin{align}\label{nq}
	h(z)&=\dfrac{1}{\sqrt{\gamma \beta}}\arctan\left(\sqrt{\dfrac{\beta}{\gamma}}q^{1-\alpha}(z)\right)+\dfrac{(1-\alpha) n z q'(z)}{q^{\alpha}(z)(\beta q^{2 (1-\alpha)}(z)+\gamma)}\nonumber \\ &=: \psi(q(z),n z q'(z)).
	\end{align}
	Now we apply \cite[Theorem 2.3f, p. 32]{miler2} to show $q$ is the best dominant. Without loss of generality, we can assume $h$ and $q$ are analytic and univalent on $\overline{\mathbb{D}}$ and $q'(\zeta) \neq 0$ for $|\zeta|=1$, which shows that $q \in \widetilde{Q}$. In order to complete the proof, it suffices to show $\psi \in \Psi_n[h,q]$, which is equivalent to show that
	\begin{align*}
	& \dfrac{1}{\sqrt{\gamma \beta}}\arctan\left(\sqrt{\dfrac{\beta}{\gamma}}q^{1-\alpha}(\zeta)\right)+\dfrac{(1-\alpha) m \zeta q'(\zeta)}{q^{\alpha}(\zeta)(\beta q^{2 (1-\alpha)}(\zeta)+\gamma)} \\& \quad = \psi(q(\zeta), m \zeta q'(\zeta)) =: \psi_{00}  \notin h(\mathbb{D}), 
	\end{align*}
	whenever $|\zeta|=1$ and $m \geq n$. The equations~(\ref{Q}) and~(\ref{nq}) yield
	$$Q(\zeta)+\dfrac{m}{n}(h(\zeta)-Q(\zeta)) = \psi_{00}.$$ As we have $m/n \geq 1$ and from Lemma~\ref{halen}, we have $Q(z) \prec h(z)$, therefore evidently, we get $\psi_{00} \notin h(\mathbb{D})$. This completes the proof.
\end{proof}
Taking $n=1$ in Theorem \ref{mainthm3}, equation~\eqref{nq} reduces to the differential equation (\ref{exact1}) with $M(z,q):= (1/\sqrt{\gamma \beta})\arctan\left(\sqrt{\tfrac{\beta}{\gamma}}q^{1-\alpha}\right)-h(z)$ and $N(z,q):= \tfrac{(1-\alpha)z}{q^{\alpha}(\beta q^{2 (1-\alpha)}+\gamma)}$. Further, we have $\tfrac{\partial M}{\partial q}=\tfrac{\partial N}{\partial z}$, which shows that the differential equation, given in  (\ref{nq}) is exact. Consequently, the equation~\eqref{solexact} yields the solution for equation (\ref{nq}) whenever $n=1$ as follows: 
$$ (1/\sqrt{\gamma \beta}) z \arctan\left(\sqrt{\tfrac{\beta}{\gamma}}q^{1-\alpha}(z)\right)- \int_{0}^{z}h(t) dt =0,$$
which coincides with~\eqref{q2} with $n=1$. We now obtain the following corollary for different choices of $h$ in Theorem~\ref{mainthm3}, when $\beta$ is a real  number.
\begin{cor}\label{cora2}
	Let  $-1 \leq \alpha \leq 0$, $\beta > 0$ and $\gamma(\neq 0)$ be a complex number. If $p \in \mathcal{H}[a_0,n]$, where $a_0=(\sqrt{\gamma/\beta}\tan(\sqrt{\gamma \beta} a))^{1/(1-\alpha)}$ and $\psi_2(p(z),z p'(z))$, given by \eqref{subord11} satisfies any of the following:
	\begin{itemize}
		\item[(i)] $ \psi_2 \prec (1+Az)/(1+Bz)$, where $-1\leq B<A \leq 1$ such that $|\sqrt{\gamma \beta}(1+Az)/(1+Bz)|< \pi/2$\\
		\item[(ii)] $\psi_2 \prec e^{\mu z}$, where $|\mu|\leq 1$ such that $|\sqrt{\gamma \beta}e^{\mu z}|<\pi/2$\\
		\item[(iii)] $\psi_2 \prec \sqrt{1+\kappa z}$, where $\kappa \in [0,1]$ such that $|\sqrt{\gamma \beta}\sqrt{1+\kappa z}| \leq \pi/2$\\
		\item[(iv)] $-\rho_2\pi/2 < \arg \psi_2 < \rho_1\pi/2,$ $\rho =\tfrac{\rho_1-\rho_2}{\rho_1+\rho_2}$, where $0<\rho_1,\rho_2 \leq 1$, and $c=e^{\rho \pi i}$ such that $|\sqrt{\gamma \beta}((1+cz)/(1-z))^{\rho'}|< \pi/2$, $(\rho'=(\rho_1+\rho_2)/2),$\end{itemize}
	then respectively for the above parts $(i)-(iv)$, we have $\RE p(z) > \xi_i(\alpha,\beta,\gamma,\lambda_i)$$(i=1,2,3,4)$, where $$\xi_i(\alpha,\beta,\gamma,\lambda_i)=\RE \left(\sqrt{\gamma/\beta}\tan\left(\sqrt{\gamma \beta}\lambda_i\right)\right)^{\tfrac{1}{1-\alpha}}$$ and  $\lambda_i$ is same as in the Corollary \ref{cora1}. The result is sharp.  
\end{cor}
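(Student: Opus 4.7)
The plan is to apply Theorem \ref{mainthm3} directly to each of the four parts. For each specified choice of $h$, namely $(1+Az)/(1+Bz)$, $e^{\mu z}$, $\sqrt{1+\kappa z}$, and $((1+cz)/(1-z))^{\rho'}$, the imposed smallness hypothesis on $|\sqrt{\gamma\beta}\,h(z)|$ ensures the admissibility condition of Theorem~\ref{mainthm3}. Invoking the theorem yields $p(z)\prec q(z)\prec H(z)$, where
\[
q(z)=\left(\sqrt{\gamma/\beta}\,\tan\!\left(\sqrt{\gamma\beta}\,Q(z)\right)\right)^{1/(1-\alpha)},
\]
and $Q$ is the Hallenbeck--Ruscheweyh dominant from \eqref{Q}. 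Since $q$ is convex (and in particular $p(\mathbb{D})\subseteq q(\mathbb{D})$), one obtains the lower bound $\RE p(z)>\inf_{|z|\leq 1}\RE q(z)$, so the entire task reduces to identifying this infimum.

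Next I would transfer the hard work already done in Corollary~\ref{cora1}. For each of the four choices of $h$, the underlying $Q(z)$ was there computed in closed hypergeometric form and was shown to satisfy $\min_{|z|\leq 1}\RE Q(z)=Q(-1)=\lambda_i$. Since $q$ is obtained from $Q$ by the single composition $w\mapsto (\sqrt{\gamma/\beta}\,\tan(\sqrt{\gamma\beta}\,w))^{1/(1-\alpha)}$, and since the hypothesis $|\sqrt{\gamma\beta}\,h(z)|<\pi/2$ (together with $Q\prec h$) keeps $\sqrt{\gamma\beta}\,Q(z)$ inside a horizontal strip on which $\tan$ is biholomorphic, the extremal boundary point $z=-1$ for $Q$ transports to the extremal boundary point for $q$. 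Consequently,
\[
\inf_{|z|\leq 1}\RE q(z)=\RE q(-1)=\RE\left(\sqrt{\gamma/\beta}\,\tan\!\left(\sqrt{\gamma\beta}\,\lambda_i\right)\right)^{1/(1-\alpha)}=\xi_i(\alpha,\beta,\gamma,\lambda_i),
\]
which is exactly the claimed lower bound. Sharpness in each case is seen by taking $p=q$ (that is, the Schwarz function $\omega(z)=z^n$), for which the bound is attained in the limit $z\to -1$.

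The main obstacle is the middle step: justifying that the infimum of $\RE q$ over the closed disk is realised at $z=-1$ once the nonlinear pair $\tan$ and $(\cdot)^{1/(1-\alpha)}$ enter the picture. In Corollary~\ref{cora1} this was immediate because the corresponding map was affine in $Q$, whereas here $\gamma$ may be complex, so $\sqrt{\gamma\beta}$ need not be real and one cannot simply invoke monotonicity on the real axis. The cleanest resolution, which I would pursue, is to exploit convexity of $q(\mathbb{D})$ established in Theorem~\ref{mainthm3}: the support-line at the left-most point of $q(\overline{\mathbb{D}})$ is vertical, and the preimage of this extremal point under the biholomorphism $\tan$ on the admissible strip coincides with the extremal point $Q(-1)=\lambda_i$ of $Q(\overline{\mathbb{D}})$. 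Everything else is a bookkeeping substitution identical to that carried out for Corollary~\ref{cora1}.
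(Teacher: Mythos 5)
Your proposal takes essentially the same route as the paper, whose proof of this corollary consists solely of the remark that it follows along the lines of Corollary~\ref{cora1}: apply Theorem~\ref{mainthm3} for each choice of $h$ and evaluate the dominant at $z=-1$. The obstacle you flag --- that the minimum of $\RE q$ need not sit over the minimum of $\RE Q$ once the nonlinear map $w\mapsto\bigl(\sqrt{\gamma/\beta}\tan(\sqrt{\gamma\beta}\,w)\bigr)^{1/(1-\alpha)}$ with possibly complex $\sqrt{\gamma\beta}$ intervenes --- is genuine, but the paper glosses over exactly the same point (already in Corollary~\ref{cora1}, where the outer map is likewise nonlinear), so your treatment is no less complete than the published one.
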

\begin{proof}
	The proof of each part of this Corollary is on the similar lines of the proof of the Corollary \ref{cora1}. Therefore, omitted.
\end{proof}
\section{Specific Examples:}
We provide here below  some illustrations to our results.
\begin{example}\label{ex1}
	If $p \in \mathcal{H}[1,1]$ satisfies
	\begin{equation}\label{eg1}
	p(z)+z p'(z) \prec \dfrac{1+z}{1-z},
	\end{equation} 
	then $p(z) \prec q(z):=-2 \tfrac{\log(1-z)}{z}-1 \prec \tfrac{1+z}{1-z}.$ Moreover, $q$ is the best $(1,1)$- dominant and is convex in $\mathbb{D}$.
\end{example}
\begin{proof}
	By taking $h(z)=\tfrac{1+z}{1-z}$, $n=1$ and $\alpha=0$ in Corollary \ref{cor1}, we obtain $Q(z)=q(z)= -2(\log(1-z))/z-1$. Now the assertion follows at once from Corollary \ref{cor1} as $h(0)=1$ and $h(z) \neq 0$ for all $z \in \mathbb{D}$. Moreover, the quantities $zQ'(z)/Q(z)$ and $zH'(z)/H(z)$ lie on either side of the imaginary axis as proved in the proof of Theorem \ref{mainthm1}.
\end{proof}
\begin{rem}\label{rem}
	The above result can also be stated as if  $p \in \mathcal{H}[1,1]$ satisfies
	\begin{equation}\label{imp} 
	\RE  (p(z)+zp'(z))>0, \end{equation}
	then
	\begin{align*}\RE p(z) &> 2\log 2-1\\& = {}_2F_1\left(1,1,2;-1\right)- \dfrac{1}{2}~{}_2F_1\left(1,2,3;-1\right) ,\end{align*}
	which is also a special case of Corollary~\ref{cora1}(i) with $A=1$, $B=-1$, $\beta=1$, $\gamma=0$ and $\alpha=0$.
\end{rem}
\begin{example}\label{ex4}
	Let $a_0=2((2/3)^{3/4}-1)$. If $p \in \mathcal{H}[a_0,1]$ satisfies the differential subordination
	\begin{equation}\label{esubord}
	\dfrac{3(p(z)/2+1)^{4/3}}{2}+ z p'(z)(p(z)/2+1)^{1/3} \prec \exp(z),
	\end{equation}
	then $p(z) \prec q(z) \prec H(z)$, where $H(z)=2((2 e^z/3)^{3/4} -1)$ and $q(z)= 2((2 (e^z-1)/(3 z))^{3/4} -1).$ Moreover, the function $q$ is convex and is the best $(a_0,1)$- dominant.
\end{example}
\begin{proof}
	Let $h(z)=\exp(z)$, $n=1$, $\alpha=-1/3$, $\beta=1/2$ and $\gamma
	=1$ in Theorem \ref{mainthm1}, then we obtain $Q(z)=(e^z-1)/z$. Now $h$ here with the given constants satisfy the hypothesis of Theorem \ref{mainthm1}, the assertion follows from the same.  Moreover, in accordance with the proof of Theorem \ref{mainthm1}, we observe that 
	\begin{equation*}
	\RE \dfrac{z Q'(z)}{Q(z)}=\RE\left(\dfrac{1+e^z(z-1)}{e^z-1}\right) \not > 0
	\end{equation*} and similarly the quantity $\RE z H'(z)/H(z)= \RE z \not > 0$, therefore both the quantities lie on either side of the imaginary axis.
\end{proof}
\begin{example}\label{ex3}
	Let $a_0= ((0.5)\tan(0.5))^{3/5}$. If $p \in \mathcal{H}[a_0,1]$ satisfies 
	\begin{equation}\label{ex31}
	2\arctan{(2 p^{5/3}(z))}+\dfrac{20}{3}\left(\dfrac{z p^{2/3}(z) p'(z)}{1+4p^{10/3}(z)}\right) \prec \dfrac{2+z}{2-z},
	\end{equation}
	then \begin{equation}\label{q1}
	p(z) \prec q(z):= \left(\tan\left(\dfrac{-1}{2}-\dfrac{2}{z}\log\left(\dfrac{2-z}{2}\right)\right)\Big/2\right)^{3/5}.\end{equation}Moreover, the function $q$ is the best $(a_0,1)$-dominant and is convex in $\mathbb{D}$.
\end{example}
\begin{proof}
	Let $h(z)=\tfrac{2+z}{2-z}$, $n=1$, $\alpha=-2/3$, $\beta=1$ and $\gamma=1/4$ in Theorem \ref{mainthm3}, then we obtain $Q(z)=-1-((4/z)\log((2-z)/z))$ and $q(z)$ is given by (\ref{q1}). As the function $h$ and all the constants satisfy the hypothesis of Theorem \ref{mainthm3}, the result follows at once. Also, we observe that the quantity in~\eqref{B} lie on either side of the imaginary axis, which is in accordance with the proof of Theorem \ref{mainthm3}
\end{proof}
\subsection{Applications to Univalent function}

We obtain a sufficient condition for univalence of $f$ in the following theorem.
\begin{thm}\label{appthm}
	Let $f \in \mathcal{H}$. If $f$ satisfies 
	\begin{equation}\label{sec41}
	\RE(f'(z)+ z f''(z)) >0,
	\end{equation} 
	then $f$ is univalent in $\mathbb{D}$ and in fact, $\RE f' > 2\log 2-1$.
\end{thm}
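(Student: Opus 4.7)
The plan is to reduce the hypothesis to the differential subordination already analyzed in Example~\ref{ex1} and Remark~\ref{rem}, and then invoke a classical univalence criterion. The key observation is that setting $p := f'$ turns a condition involving $f''$ into the familiar Hallenbeck--Ruscheweyh--type subordination $p(z)+zp'(z) \prec h(z)$ with $h(z)=(1+z)/(1-z)$.

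First, I let $p(z) := f'(z)$. Since $f \in \mathcal{H}$, we have $f(z) = z + a_2 z^2 + \cdots$, hence $p(0)=1$ and $p \in \mathcal{H}[1,1]$. The identity $zp'(z) = zf''(z)$ rewrites the hypothesis~\eqref{sec41} as $\RE(p(z)+zp'(z)) > 0$, equivalently $p(z)+zp'(z) \prec (1+z)/(1-z)$, which is precisely the subordination treated in Example~\ref{ex1} (the $A=1,\ B=-1,\ \alpha=0,\ \beta=1,\ \gamma=0$ specialization of Corollary~\ref{cora1}(i)).

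Next, applying Example~\ref{ex1} (equivalently Remark~\ref{rem}) yields the convex best dominant $q(z) = -(2/z)\log(1-z) - 1$, which has real Taylor coefficients. Because $q(\mathbb{D})$ is convex and symmetric about the real axis, and $q(z) \to +\infty$ as $z \to 1^-$, the minimum of $\RE q$ over $\mathbb{D}$ is attained as $z \to -1$, giving $q(-1) = 2\log 2 - 1$. Subordination therefore delivers the sharp lower bound $\RE f'(z) = \RE p(z) > 2\log 2 - 1$ for all $z \in \mathbb{D}$.

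Finally, since $2\log 2 - 1 \approx 0.386 > 0$, the derivative $f'$ has strictly positive real part throughout the convex domain $\mathbb{D}$, and the classical Noshiro--Warschawski theorem immediately forces $f$ to be univalent. There is essentially no obstacle here: the only step requiring any thought is recognizing that the condition on $f''$ collapses, under the substitution $p=f'$, to a case already resolved in the previous section; the sharp constant $2\log 2 - 1$ is then inherited verbatim from Remark~\ref{rem}, and the passage from a positive real part of the derivative to univalence is standard.
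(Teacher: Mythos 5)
Your proof is correct and follows essentially the same route as the paper: substitute $p=f'$ to reduce~\eqref{sec41} to the subordination of Example~\ref{ex1}, extract the sharp bound $\RE f' > 2\log 2-1$ from Remark~\ref{rem}, and conclude univalence via the Noshiro--Warschawski theorem. The extra justification you give for why the minimum of $\RE q$ occurs at $z=-1$ is a welcome addition but does not change the argument.
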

\begin{proof}
	Let $p(z)= f'(z)$, then we have $p \in \mathcal{H}[1,1]$. We observe that equation (\ref{sec41}) becomes equivalent to equation (\ref{eg1}) for $p(z)=f'(z)$. Since $p$ satisfies the hypothesis of Example \ref{ex1}, we obtain  $\RE f'(z) > 2\log2-1 >0$. Thus, Noshiro-Warschawski result yields the function $f$ is univalent.
\end{proof}
The result by Miller Mocanu \cite{miler} with $n=1$ is as follows: 
\begin{cor}\emph{\cite{miler}}\label{appcor}
	Let $a \in [0,1)$ and $\eta=\eta(a)$ be defined as
	\begin{equation*}
	\eta= \left(2(1-a)\beta(1) + (2a-1)\right)^{1/2},
	\end{equation*}
	where $\beta(x)=\int_{0}^{1} \tfrac{t^{x-1}}{1+t} dt$. If $f \in \mathcal{H}$, then
	\begin{equation}\label{miller}
	\RE f'(z) >a \Rightarrow \RE \sqrt{\dfrac{f(z)}{z}} > \eta(a).
	\end{equation}
\end{cor}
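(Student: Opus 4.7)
The plan is to substitute $p(z) = \sqrt{f(z)/z}$ and reduce the problem to Corollary~\ref{cor2}. Since $\RE f'(z) > a \geq 0$, the Noshiro--Warschawski theorem makes $f$ univalent, so $f(z)/z$ is a nonvanishing analytic function on $\mathbb{D}$ with value $1$ at the origin. Thus the principal branch $p(z) = \sqrt{f(z)/z}$ is well defined, lies in $\mathcal{H}[1,1]$, and satisfies
\begin{equation*}
f'(z) = \frac{d}{dz}\bigl(z p^2(z)\bigr) = p^2(z) + 2 z p(z) p'(z).
\end{equation*}
The hypothesis $\RE f'(z) > a$ is equivalent to the subordination $f'(z) \prec h(z)$, where $h(z) = (1 + (1-2a)z)/(1-z)$ is the convex univalent function mapping $\mathbb{D}$ onto the half-plane $\{w : \RE w > a\}$; in particular $h(0)=1$ and $\RE h > 0$, so $h$ satisfies the hypotheses required by Corollary~\ref{cor2}.

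Applying Corollary~\ref{cor2} with $n=1$ to the subordination $p^2(z) + 2 z p(z) p'(z) \prec h(z)$ yields $p(z) \prec q(z) := \sqrt{Q(z)}$, where $Q(z) = (1/z)\int_0^z h(t)\, dt$. Writing $h(z) = (2a-1) + 2(1-a)/(1-z)$ and integrating gives
\begin{equation*}
Q(z) = (2a-1) - \frac{2(1-a)\log(1-z)}{z}.
\end{equation*}
By Corollary~\ref{cor2} the dominant $q$ is convex in $\mathbb{D}$, and since $h$ (hence $Q$ and $q$) has real Taylor coefficients, the image $q(\mathbb{D})$ is a convex domain symmetric about the real axis.

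Therefore the leftmost point of the convex domain $q(\mathbb{D})$ is the boundary value $q(-1)$, which gives
\begin{equation*}
\min_{z \in \mathbb{D}} \RE q(z) = \sqrt{Q(-1)} = \sqrt{(2a-1) + 2(1-a)\log 2} = \sqrt{(2a-1) + 2(1-a)\beta(1)} = \eta(a),
\end{equation*}
using $\beta(1) = \int_0^1 dt/(1+t) = \log 2$. From the subordination $p \prec q$ one concludes $\RE \sqrt{f(z)/z} = \RE p(z) > \eta(a)$, which is the desired implication~\eqref{miller}.

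The main (minor) obstacle is checking that $Q(-1)$ is positive so that $\eta(a)$ is a real positive number, and justifying that the infimum of $\RE q$ over the convex image $q(\mathbb{D})$ is indeed realised at $\zeta = -1$. For the positivity, a direct computation gives $(2a-1) + 2(1-a)\log 2 = (2 - 2\log 2)a + (2\log 2 - 1)$, which is strictly positive on $[0,1)$ since both endpoints are positive and the expression is linear in $a$. The location of the minimum follows from the convexity of $q(\mathbb{D})$ combined with the real-axis symmetry inherited from the real coefficients of $h$.
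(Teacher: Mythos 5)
Your proof is correct in substance, but note that the paper itself offers no proof of this statement: it is quoted verbatim as a known result of Miller and Mocanu \cite{miler}, so there is no internal argument to compare against. What you have done is reconstruct (essentially) the original Miller--Mocanu argument, and you have done so economically by routing it through the paper's own Corollary~\ref{cor2}: setting $p(z)=\sqrt{f(z)/z}$ turns $\RE f'>a$ into $p^2+2zpp'\prec h$ with $h(z)=(1+(1-2a)z)/(1-z)$, and the corollary hands you $p\prec q=\sqrt{Q}$ with $Q(z)=(2a-1)-2(1-a)\log(1-z)/z$, whence $Q(-1)=(2a-1)+2(1-a)\log 2=\eta(a)^2$ since $\beta(1)=\log 2$. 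This is a genuinely worthwhile observation: the cited corollary is itself a special case of the machinery the paper develops, which the paper does not point out.

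Two small points deserve tightening. First, your claim that the leftmost point of the convex, real-symmetric domain $q(\mathbb{D})$ is the boundary value at $\zeta=-1$ (rather than at $\zeta=1$) is not automatic from convexity and symmetry alone; it needs the monotonicity of $Q$ on $(-1,1)$. This is easy: with $u(x)=x/(1-x)+\log(1-x)$ one has $u(0)=0$ and $u'(x)=x/(1-x)^2$, so $u\ge 0$ on $(-1,1)$ and hence $\tfrac{d}{dx}\left(-\log(1-x)/x\right)=u(x)/x^2\ge 0$; thus $Q$ increases along the real axis and $\inf\RE q=\sqrt{Q(-1)}$. (The paper makes the same unproved assertion in Corollary~\ref{cora1}(i), so you are in good company.) Second, to invoke Corollary~\ref{cor2} you need $Q$ nonvanishing so that $\sqrt{Q}$ is single-valued; this follows from $Q\prec h$ and $\RE h>a\ge 0$, which you implicitly use but could state. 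With these two remarks supplied, the argument is complete.
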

\begin{cor}
	Let $f \in \mathcal{H}$ and $a= 2\log2-1$. If $f$ satisfies the equation (\ref{sec41}), then\\
	$(i)$ $\RE  \sqrt{\tfrac{f(z)}{z}} > \eta(a)$.\\
	$(ii)$ $\RE \dfrac{f(z)}{z} > a$.
\end{cor}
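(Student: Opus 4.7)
The proof proposal is essentially a direct combination of Theorem~\ref{appthm} with Corollary~\ref{appcor}, plus a one-line integration argument. The hypothesis~\eqref{sec41} is precisely the hypothesis of Theorem~\ref{appthm}, which already delivers the key estimate $\RE f'(z) > 2\log 2 - 1 = a$ throughout $\mathbb{D}$. Since $a = 2\log 2 - 1 \approx 0.386 \in [0,1)$, the value $\eta(a)$ from Corollary~\ref{appcor} is well defined and the quoted implication applies verbatim, so part~$(i)$ follows at once by substituting the conclusion of Theorem~\ref{appthm} into~\eqref{miller}.

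For part~$(ii)$, the plan is to use the standard integral identity
\begin{equation*}
\frac{f(z)}{z} = \int_{0}^{1} f'(tz)\, dt,
\end{equation*}
which holds because $\frac{d}{dt} f(tz) = z f'(tz)$ and $f(0)=0$. Taking real parts, exchanging real part with the integral, and applying the pointwise bound $\RE f'(w) > a$ for all $w \in \mathbb{D}$ (again from Theorem~\ref{appthm}) to $w = tz$ with $t \in [0,1]$ and $|z|<1$, one gets
\begin{equation*}
\RE \frac{f(z)}{z} = \int_{0}^{1} \RE f'(tz)\, dt > a,
\end{equation*}
with strict inequality because the continuous positive function $t \mapsto \RE f'(tz) - a$ on $[0,1]$ has positive integral.

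There is essentially no obstacle: the work was done in establishing Theorem~\ref{appthm} (via Example~\ref{ex1}, which in turn used Corollary~\ref{cor1} of the main subordination theorem), and in quoting Corollary~\ref{appcor} of Miller--Mocanu. The only tiny points worth double-checking are that $2\log 2 -1$ indeed lies in $[0,1)$ so that $\eta(a)$ is defined, and that the strict inequality in part~$(ii)$ survives the integration, which it does by a continuity argument on the compact interval $[0,1]$.
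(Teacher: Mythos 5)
Your proof is correct. Part $(i)$ is exactly the paper's argument: Theorem~\ref{appthm} supplies $\RE f'>2\log 2-1=a\in[0,1)$, and Corollary~\ref{appcor} then gives the conclusion. For part $(ii)$, however, you take a genuinely different route. The paper sets $p(z)=f(z)/z$, observes that $p(z)+zp'(z)=f'(z)$, and invokes the sharp subordination result of Remark~\ref{rem} (the case $h(z)=(1+z)/(1-z)$, $n=1$ of Corollary~\ref{cora1}(i)), so that the mere qualitative hypothesis $\RE f'>0$ already yields $\RE(f(z)/z)>2\log 2-1$. You instead use the elementary integral mean $f(z)/z=\int_0^1 f'(tz)\,dt$ together with the quantitative bound $\RE f'>a$ from Theorem~\ref{appthm}; averaging preserves the strict lower bound $a$ on the compact interval, as you note. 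Both arguments are valid here because Theorem~\ref{appthm} does deliver the stronger input $\RE f'>a$. The trade-off is worth recording: your averaging argument is self-contained and elementary, but it consumes the full strength of $\RE f'>a$ just to recover the same constant $a$, whereas the paper's subordination route needs only $\RE f'>0$ to reach $\RE(f(z)/z)>a$ --- and, pushed further with the stronger input $\RE f'>a$ (e.g.\ via $h(z)=a+(1-a)(1+z)/(1-z)$), it would in fact give the better bound $a+(1-a)(2\log 2-1)$, which your approach cannot see.
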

\begin{proof}
	(i) The proof follows directly from Theorem \ref{appthm} and Corollary \ref{appcor}.\\
	(ii) Let $p(z) =f(z)/z$, then we have $p \in \mathcal{H}[1,1]$. Thus Remark \ref{rem} yields $\RE f'(z) >0 \Rightarrow \RE(f(z)/z) > a$. Thus, the result follows now using Theorem  \ref{appthm}. 
\end{proof}


\subsection*{Acknowledgment}
The work presented here is supported by a Research Fellowship from the Department of Science and Technology, New Delhi.
	
\end{document}